\newcommand{\s}{\sum}
\newcommand{\R}{{\mathbb R}}
\newcommand{\Rp}{{\mathbb R^+}}
\newcommand{\Md}{{\mathbb M^d}}
\newcommand{\D}{{\mathbb D}(\Rp,\R)}
\newcommand{\DD}{{\mathbb D}(\Rp,{\mathbb R}^{2})}
\newcommand{\Dppd}{{\mathbb D}(\Rp,{\mathbb R}^{d^2+2d+1})}
\newcommand{\Dpppd}{{\mathbb D}(\Rp,{\mathbb R}^{3d+1})}
\newcommand{\Dpd}{{\mathbb D}(\Rp,{\mathbb R}^{d+1})}
\newcommand{\Dpddd}{{\mathbb D}(\Rp,{\mathbb R}^{3d+1})}
\newcommand{\Dcd}{{\mathbb D}(\Rp,{\mathbb R}^{4d})}
\newcommand{\Dd}{{\mathbb D}(\Rp,{\mathbb R}^{d})}
\newcommand{\Dbd}{{\mathbb D}(\Rp,{\mathbb R}^{\bar d})}
\newcommand{\Dbdd}{{\mathbb D}(\Rp,{\mathbb R}^{2\bar d})}
\newcommand{\Dbdwad}{{\mathbb D}(\Rp,{\mathbb R}^{{\bar d}^2})}
\newcommand{\Ddd}{{\mathbb D}(\Rp,{\mathbb R}^{2d})}
\newcommand{\Ddwadd}{{\mathbb D}(\Rp,{\mathbb R}^{d^2+2d})}
\newcommand{\Ddwad}{{\mathbb D}(\Rp,{\mathbb R}^{d^2+d})}
\newcommand{\Dddt}{{\mathbb D}(\Rp,{\mathbb R}^{2d+1})}
\newcommand{\Ddddt}{{\mathbb D}(\Rp,{\mathbb R}^{3d+1})}
\newcommand{\DMd}{{\mathbb D}(\Rp,{\mathbb M}^{d})}
\newcommand{\Ddddd}{{\mathbb D}(\Rp,{\mathbb R}^{4d})}
\newcommand{\Dddddj}{{\mathbb D}(\Rp,{\mathbb R}^{4d+1})}
\newcommand{\lra}{\longrightarrow}
\newcommand{\BH}{B^H}
\newcommand{\ZH}{Z^{H}}
\newcommand{\BB}{Z^{H}}
\newcommand{\LH}[1]{\mathbb{L}^{1/H}_{#1}}
\newcommand{\N}{{\mathbb N}}
\newcommand{\No}{{\mathbb N}\cup\{0\}}
\newcommand{\RR}{{\mathbb R}}
\newcommand{\wlr}{\geq}
\newcommand{\Rd}{{{{\mathbb R}^d}}}
\newcommand{\RRp}{{{{\mathbb R}^+}}}
\newcommand{\mr}{\leq}
\newtheorem{theorem}{\bf Theorem}[section]
\newtheorem{proposition}[theorem]{\bf Proposition}
\newtheorem{lemma}[theorem]{\bf Lemma}
\newtheorem{definition}[theorem]{\bf Definition}
\newtheorem{corollary}[theorem]{\bf Corollary}
\newtheorem{remark}[theorem]{\bf Remark}
\newenvironment{proof}{{\sc Proof.}}{\hfill $\Box$}
\begin{document}\title{\bf SDEs with  constraints driven by processes with
bounded $p$-variation}
\author{Adrian Falkowski and Leszek S\l omi\'nski
\footnote{Corresponding author. E-mail address: leszeks@mat.umk.pl;
Tel.: +48-566112954; fax: +48-566112987.}\\
 \small Faculty of Mathematics and Computer Science,
Nicolaus Copernicus University,\\
\small ul. Chopina 12/18, 87-100 Toru\'n, Poland}
\date{}
\maketitle
\begin{abstract}
We study the existence, uniqueness and approximation of solutions of
stochastic differential equations with constraints driven by
processes  with bounded $p$-variation. Our main tool are new
estimates showing  Lipschitz continuity of the deterministic Skorokhod problem
in $p$-variation norm. Applications to fractional SDEs  with constraints are given.
\end{abstract}
{\em Key Words}:
 Skorokhod problem,
 $p$-variation, integral equations, stochastic differential equations with constraints,  reflecting boundary condition.\\
{\em AMS 2000 Subject Classification}: Primary: 60H20; Secondary: 60G22.

\section{Introduction}

In the present paper we study the problems of existence,
uniqueness and approximation of solutions of finite-dimensional
stochastic differential equations  \- (SDEs) with constraints driven
by general processes with bounded $p$-variation, $p\geq1$. More
precisely, let $f:\Rd\rightarrow\Rd$,
$g:\Rd\rightarrow\Rd\otimes\Rd$ be measurable functions, $A$ be a
one-dimensional process with locally  bounded variation and $Z$ be
a $d$-dimensional  process with locally bounded  $p$-variation. We
consider SDEs with reflecting boundary condition of the form
\begin{equation}
\label{eq1.1} X_t = X_0 + \int_0^t f(X_{s-})\,dA_s+\int_0^t
g(X_{s-})\,dZ_s+K_t, \quad t\in\mathbb R^+.
\end{equation}
By a solution to (\ref{eq1.1}) we mean a pair $(X,K)$ consisting
of a process $X$ living over a given $d$-dimensional barrier
process $L$ and a $d$-dimensional process $K$, called regulator
term, whose each component $K^i$ is nondecreasing and increases
only when  $X^i$  is living on  $L^i$ (for details see Section 3). Equation (\ref{eq1.1}) is
called the Skorokhod SDE in analogy with the case $L=0$ first
discussed by  Skorokhod \cite{sk} for a standard Brownian motion
in place of $Z$ and $A_t=t$, $t\in\Rp$. Next, many attempts have
been made to extend Skorokhod's results to larger class of domains
or larger class of driving processes (see, e.g.,
\cite{cl,di,ls,s4,ta}). This kind of equations have many   applications, for instance in queueing
 systems, seismic reliability analysis and  finance (see, e.g.,\cite{as,dr,KS,ss}).
In recent papers by Besalu and Rovira
\cite{br} and  Ferrante and Rovira \cite{fr1} SDE with
non-negativity constraints driven by fractional Brownian motion
$B^H$  with Hurst index $H>1/2$ and $A_t=t$, $t\in\Rp$, is
studied. This equation is a particular case of (\ref{eq1.1})
because $B^H$ has locally bounded $p$-variation for $p>1/H$. In
the main  theorem of \cite{fr1} the existence of a solution is
proved  under the assumption that the coefficients $f,g$ are
Lipschitz continuous. The proof is based on a quite natural in the
context of SDEs driven by $B^H$ technics based on
$\lambda$-H\"older norms. Unfortunately, in \cite{fr1} it is only
shown that the  solution is unique for some small time interval.
To our knowledge, global uniqueness for fractional SDEs with
constraints is still an open problem.
In contrast to \cite{fr1}, in our paper we use  $p$-variation norm (for the theory of functions of  $p$-variation and its various applications  see, e.g., \cite{dn,dn1}).

In our paper we consider two conditions: continuity and linear
growth of $f$ and H\"older continuity of $g$ (condition (H1))  and
local Lipschitz continuity of $f$  and local H\"older continuity
of the derivative of each component $g_{i,j}$ $i,j=1,\dots,d$
(condition (H2)) (see Section 3). We show that under (H1) and (H2)
there exists a unique (globally in time) solution to (\ref{eq1.1}), which can be approximated by some natural
approximation  schemes.

The paper is organised as follows.

In Section 2 we consider the  deterministic Skorokhod problem
$x=y+k$ associated with $y\in\Dd$ and time dependent lower barrier
$l\in\Dd$ with $l_0\leq y_0$. We show that the mapping
$(y,l)\mapsto(x,k)$ is Lipschitz continuous  in $p$-variation
norm. In fact, we show that if $(x,k)$, is a solution associated
with ${y}\in\Dd$ and barrier $l\in\Dd$ and $(x',k')$,  is a
solution associated with ${y'}\in\Dd$ and barrier $l'\in\Dd$ then
for any $T\in\Rp$,
\begin{equation}\label{eq1.2}
\bar V_p(x-x')_T\leq(d+1)\bar V_p(y-y')_T +d\bar V_p(l-l')_T
\end{equation} and
\begin{equation}
\label{eq1.3} \bar V_p(k-k')_T\leq d\bar V_p(y-y')_T +d\bar V_p(l-l')_T.
\end{equation}
It is worth noting here that in \cite[Remark 3.6]{fr1} it is
observed that $(y,l)\mapsto(x,k)$ is not Lipschitz continuous in
the $\lambda$-H\"older norm and for that reason in \cite{fr1} the
authors were not able to obtain global uniqueness.

In  Section 3 we consider a deterministic counterpart to
(\ref{eq1.1}). We prove that under (H1) the deterministic equation
has a solution. If moreover (H2) is satisfied that it is unique.
Then we show convergence of some natural approximation schemes for
a deterministic equation of the form (\ref{eq1.1}). In the proofs
of convergence we use the Skorokhod topology $J_1$  and general
methods of approximations of stochastic integrals and solutions of
SDEs developed in \cite{jmp,ms,s1,s2}.
For the convenience of the reader we prove in Appendix a general tightness criterion and a functional limit theorem for sequences of integrals with respect to c\`adl\`ag functions  with bounded $p$-variation.

In Section 4 we apply our deterministic results  to obtain the
existence, uniqueness and approximation of solutions  to SDEs of the form
(\ref{eq1.1}). In particular,  we show that if
$f,g$ satisfy (H1) and (H2) then (\ref{eq1.1}) has a unique strong
solution $(X,K)$.   Moreover,  we show convergence  to $(X,K)$ of some easy to implement  approximations
$(X^n,K^n)$ constructed in analogy with the classical Euler
scheme.
To illustrate  how our results work in practice, at the end of the
paper we consider fractional SDEs with constraints of the form
 \begin{equation}\label{eq1.4}
 X_t = X_0 + \int_0^t f(X_{s-})\,da_s+\int_0^t
g(X_{s-})\,dZ^{H}_s+K_t, \quad t\in\mathbb R^+.
\end{equation}
Here $a:\RRp\to\RR$ is a continuous function with locally bounded
variation and
$
Z^{H,i}=\int_0^\cdot\sigma^i_s\,d B^{H,i}_s$, $t\in\Rp$,
where $B^{H,1},...,B^{H,d}$  are independent fractional Bro\-wnian
motions and $\sigma^i:\RRp\to\RR$ are such that
$\|\sigma^i\|_{\LH{[0,T]}}:=(\int_0^T|\sigma^i_s|^{1/H}ds)^H<\infty$,
$T>0$, $i=1,\dots,d$. Under the last assumption $\ZH$ is a
centered Gaussian process with continuous trajectories  such that
$P(V_p(\ZH)_T<\infty)=1$, $p>1/H$, $T\in\Rp$ (see Section 4), so
(\ref{eq1.4}) is a particular case of (\ref{eq1.1}). Of course,
(\ref{eq1.4}) generalizes classical fractional SDEs driven by
$B^H$.

 In the sequel  we will use the  following  notation.
$\Md$ is the space of $d\times d$ real matrices
$A$, with the matrix norm $\|A\|=\sup\{|Au|;u\in\Rd,|u|=1\}$,
where $|\cdot|$ denotes  the usual Euclidean norm in $\Rd$, $\RRp=[0,\infty)$. $\Dd$
is the space of c\`adl\`ag  mappings $x:\RRp\to\RR^d$, i.e.
mappings which are right continuous and admit left-hands limits
 equipped
with the Skorokhod topology  $J_1$. For $x\in\Dd$, $t>0$ we denote
$x_{t-}=\lim_{s\uparrow t}x_s$ and $v_{p}(x)_{[a,b]} = \sup_\pi
\sum_{i=1}^n |x_{t_i}-x_{t_{i-1}}|^p <\infty$, where the supremum
is taken over all subdivisions $\pi=\{a=t_0<\ldots<t_n=b\}$ of
$[a,b]$. $V_p(x)_{[a,b]}=(v_p(x)_{[a,b]})^{1/p}$ and $\bar
V_p(x)_{[a,b]}=V_p(x)_{[a,b]}+|x_a|$ is the usual variation norm.
For simplicity of notation we write
$v_p(x)_T=v_p(x)_{[0,T]}$, $V_p(x)_T= V_p(x)_{[0,T]}$ and $\bar
V_p(x)_T= \bar V_p(x)_{[0,T]}$. If $x\in\DMd$  then in the
definition of $p$-variation $v_p$ we use the matrix norm
$\|\cdot\|$ in place of the Euclidean norm.
We write $x\leq x'$,
$x,x'\in\Dd$ if $x^i_t\leq x'^i_t$, $t\in\Rp$, $i=1,\dots,d$.
Every process $Y$ appearing in the sequel is assumed to have
c\`adl\`ag trajectories.

\section{Lipschitz continuity of the solution of the Skorokhod
problem in $p$-variation norm}

Let $y,l\in\Dd$ be such that $l_0\leq y_0$. We recall that a~pair
$(x,k)\in\Ddd$ is called a solution of the  Skorokhod problem
associated with $y$ and lower barrier $l$ ($(x, k)=SP_{l}(y)$ for
short) if
\begin{description}
\item[(i)] $ x_t = y_t+k_t\geq l_t$, $t\in\Rp$,
\item[(ii)] $k_0=0$, $k=(k^1,\dots,k^d)$, where  $k^i$ are
nondecreasing functions  such that for every $t\in\Rp$,
\[
\int_0^t(x^i_s-l^i_s)\,dk^i_s=0,\quad i=1,\dots,d.
\]
\end{description}

The Lipschitz continuity  of the mapping $(y,l)\mapsto(x,k)$ in
the supremum norm is well known. More precisely, let $(x,k)=SP_l(y)$,
$(x',k')=SP_{l'}(y')$. Since $k_t=\sup_{s\leq t}(y_s-l_s)^-$ and
$k'_t=\sup_{s\leq t}(y'_s-l'_s)^-$, for any $T\in\Rp$ we have
\begin{equation}\label{eq2.1}
\sup_{t\leq T} |x_t-x'_t|\leq2\sup_{t\leq T}|y_t-y'_t|
+\sup_{t\leq T}|l_t-l'_t|
\end{equation}
and
\begin{equation}\label{eq2.2}
 \sup_{t\leq T}|k_t-k'_t|\leq\sup_{t\leq T}|y_t-y'_t|
+\sup_{t\leq T}|l_t-l'_t|.\end{equation} On the other hand,  it
was observed in Ferrante and Rovira \cite{fr1} that above property
does not hold in $\lambda$-H\"older norm. We  will show that the
Lipschitz continuity  of the mapping $(y,l)\mapsto(x,k)$  holds in
the variation norm. A key step in proving it is the following
estimate.

\begin{theorem}\label{thm1}
For any  $y^1,y^2\in\D$ and  $T\in\Rp$,
\[
v_p(\sup_{s\leq\cdot}y^1_s-\sup_{s\leq\cdot}y^2_s)_T\leq v_p(y^1-y^2)_T.
\]
\end{theorem}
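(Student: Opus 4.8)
The plan is to fix a subdivision, reduce the statement to a one–dimensional inequality for the finite sequence of values of the running suprema, and then exploit a ``no–overshoot'' property of the running supremum together with the concavity inequality $\|\cdot\|_p\le\|\cdot\|_1$ on monotone stretches. Write $M^j_t:=\sup_{s\le t}y^j_s$, $D_t:=M^1_t-M^2_t$ and $h:=y^1-y^2$; it suffices to show $\sum_{i=1}^n|D_{t_i}-D_{t_{i-1}}|^p\le v_p(h)_T$ for every subdivision $0=t_0<\dots<t_n=T$. Fix such a subdivision and set $a^j_i:=M^j_{t_{i-1}}$, $b^j_i:=\sup_{t_{i-1}\le s\le t_i}y^j_s$, so that $M^j_{t_i}-M^j_{t_{i-1}}=(b^j_i-a^j_i)^+$ and hence $D_{t_i}-D_{t_{i-1}}=(b^1_i-a^1_i)^+-(b^2_i-a^2_i)^+$. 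A short case analysis, according to the signs of $b^1_i-a^1_i$ and $b^2_i-a^2_i$, gives the no–overshoot property: with $\beta_i:=b^1_i-b^2_i$, the number $D_{t_i}$ always lies in the closed interval with endpoints $D_{t_{i-1}}$ and $\beta_i$. (Indeed: if only the first term is positive then $D_{t_i}=b^1_i-a^2_i\in(D_{t_{i-1}},\beta_i]$; if only the second, $D_{t_i}=a^1_i-b^2_i\in[\beta_i,D_{t_{i-1}})$; if both, $D_{t_i}=\beta_i$; if neither, $D_{t_i}=D_{t_{i-1}}$.)

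Next fix $\varepsilon>0$ and choose, for each $i\ge1$, a point $u_i\in[t_{i-1},t_i]$ with $y^1_{u_i}\ge b^1_i-\varepsilon$ when $D_{t_i}\ge D_{t_{i-1}}$ and $y^2_{u_i}\ge b^2_i-\varepsilon$ otherwise (possible since the supremum of a c\`adl\`ag function over a compact interval is approached arbitrarily well), and put $u_0:=0$. Using $b^2_i\ge y^2_{u_i}$, resp.\ $b^1_i\ge y^1_{u_i}$, one gets $\beta_i\le h_{u_i}+\varepsilon$ on increasing steps and $\beta_i\ge h_{u_i}-\varepsilon$ on decreasing steps; combining this with the no–overshoot property yields
$$D_{t_{i-1}}<D_{t_i}\ \Longrightarrow\ D_{t_i}\le h_{u_i}+\varepsilon,\qquad D_{t_{i-1}}>D_{t_i}\ \Longrightarrow\ D_{t_i}\ge h_{u_i}-\varepsilon ,$$
while $D_{t_0}=h_0=h_{u_0}$; note also $u_0\le u_1\le\dots\le u_n$.

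The heart of the argument is now purely one–dimensional. Decompose $\{0,\dots,n\}$ into the maximal blocks on which the finite sequence $(D_{t_i})_i$ is (weakly) monotone, with turning indices $0=m_0<m_1<\dots<m_R$; choosing $m_l$ to be the first index of its plateau (and adjusting $m_R$ similarly) one may assume the increment of $(D_{t_i})$ into each $m_l$ is non‑zero. On each block the increments have a fixed sign and sum to the total change over the block, so $\sum_{i=m_{l-1}+1}^{m_l}|D_{t_i}-D_{t_{i-1}}|^p\le|D_{t_{m_l}}-D_{t_{m_{l-1}}}|^p$ by $\|\cdot\|_p\le\|\cdot\|_1$. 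By the displayed implications, $h_{u_{m_l}}\ge D_{t_{m_l}}-\varepsilon$ at a local maximum of $(D_{t_i})$ and $h_{u_{m_l}}\le D_{t_{m_l}}+\varepsilon$ at a local minimum (and $h_{u_{m_0}}=D_{t_{m_0}}$); since consecutive turning indices alternate between local minima and maxima, $h_{u_{m_{l-1}}}$ and $h_{u_{m_l}}$ lie on the correct sides of $D_{t_{m_{l-1}}}$ and $D_{t_{m_l}}$, whence $|D_{t_{m_l}}-D_{t_{m_{l-1}}}|\le|h_{u_{m_l}}-h_{u_{m_{l-1}}}|+2\varepsilon$. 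Summing over $l$ and using that $u_{m_0}\le u_{m_1}\le\dots$ is a subdivision of $[0,T]$, so that $\sum_l|h_{u_{m_l}}-h_{u_{m_{l-1}}}|^p\le v_p(h)_T$, we obtain $\sum_{i=1}^n|D_{t_i}-D_{t_{i-1}}|^p\le v_p(h)_T+C\varepsilon$ with $C$ depending only on $p$, $n$ and $\sup_{[0,T]}|h|$ (expand the $p$‑th powers). Letting $\varepsilon\downarrow0$ and then taking the supremum over subdivisions completes the proof.

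I expect the main obstacle to be exactly the passage from per–step to global information: the naive term‑by‑term bound $|D_{t_i}-D_{t_{i-1}}|^p\le v_p(h)_{[t_{i-1},t_i]}$ is false ($D$ can rise on an interval where $h$ is constant, ``repaying'' earlier oscillations of $y^2$), so one genuinely has to group the steps into monotone blocks of $D$ and cash in the no–overshoot inequality only at the turning points; making the choice of the points $u_i$ and the $\varepsilon$'s cooperate, and handling the mild edge effects at the first and last blocks, is the remaining routine work.
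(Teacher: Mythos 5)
Your proof is correct. The no--overshoot property, the $\varepsilon$-near-maximizer bounds $D_{t_i}\le h_{u_i}+\varepsilon$ on up-steps and $D_{t_i}\ge h_{u_i}-\varepsilon$ on down-steps, the monotone-block grouping with the superadditivity $\sum|c_i|^p\le|\sum c_i|^p$ for same-sign increments, and the final telescoping over the ordered points $u_{m_0}\le\dots\le u_{m_R}$ all check out (the degenerate cases --- coinciding $u$'s, zero increments, the first and last blocks --- are handled or harmless), and the order of limits ($\varepsilon\downarrow0$ for a fixed subdivision, then the supremum over subdivisions) is the right one.

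Your route is, however, genuinely different from the paper's. The paper first reduces to step functions, where a \emph{maximizing} subdivision $i_0<\dots<i_m$ for $v_p(\sup y^1-\sup y^2)$ exists; it then derives structural properties of that extremal subdivision (strict alternation of the signs of consecutive increments, and the localization $i_{k-1}\le l^{\wedge}_k\le l^{\vee}_k\le i_k$ of the last argmax indices) by contradiction arguments, compares each extremal increment with an increment of $y^1-y^2$ at exact argmax times, and finally passes to general c\`adl\`ag functions by discretization, $J_1$-convergence of running suprema (via Ethier--Kurtz and Jacod--Shiryaev), and lower semicontinuity of $v_p$, with a separate treatment of jump times of $y^1,y^2$ at $T$. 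You instead bound an \emph{arbitrary} subdivision directly for general c\`adl\`ag inputs: your monotone-block decomposition plays the role of the paper's alternating-sign property of the extremal subdivision, and your $\varepsilon$-argmax points $u_i$ replace the exact argmaxes $l^j_k$ (whose existence is what forces the paper into the step-function setting). The shared insight is that one should cash in the comparison with $y^1-y^2$ only at the turning points of the running-sup difference, using (near-)argmax times that are automatically ordered. What your version buys is self-containedness --- no reduction to step functions, no compactness or limit-theorem machinery --- at the cost of some $\varepsilon$-bookkeeping; the paper's version gets exact inequalities at the combinatorial stage but must then invoke approximation results in the Skorokhod topology to conclude.
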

\begin{proof} It is clear that  without loss of generality we may and will assume that
\begin{equation}\label{eq2.0}
v_p(\sup_{s\leq\cdot}y^1_s-\sup_{s\leq\cdot}y^2_s)_T> 0.
\end{equation}
 {\em Step 1.} We assume additionally that  $y^1,y^2$
are step functions of the form
\[
y^j_t=y_{j,i},\quad t\in[t_{i-1},t_{i}),\quad i=1,\dots,n-1
\]
and $y^j_t=y_{j,n}$, $t\in[t_{n-1},t_n=T]$, $j=1,2$, for some
partition $0=t_0<t_1<\dots<t_n=T$ of the interval $[0,T]$.

Set $Y^j_{k}=\max_{1\leq i\leq k}y_{j,i}$, $k=1,\dots,n$, $j=1,2$. By (\ref{eq2.0}) it is clear that there exists $k$
such that $Y^1_{k}>Y^1_{k-1}$ or $Y^2_{k}>Y^2_{k-1}$. Without loss of generality we  will assume that for any
$k=2,\dots,n$,
\begin{equation}\label{eq2.3}
Y^1_{k}>Y^1_{k-1} \quad\mathrm{or}\quad
Y^2_{k}>Y^2_{k-1}.
\end{equation}
Indeed, if (\ref{eq2.3}) does not hold then  we set $u_0=0$,
\[
u_k=\inf\{i>u_{k-1};Y^1_{i}>Y^1_{i-1}\,\mathrm{or}\,Y^2_{i}>Y^2_{i-1}\}\wedge n,\quad
k=1,\dots,n
\]
and $\tilde n=\inf\{k;u_k=n\}$,  $\tilde y^j_t=y_{j,u_k}$,
$t\in[t_{u_{k-1}},t_{u_{k}})$ for $k=1,\dots,\tilde n-1$, $\tilde
y^j_t=y_{j,\tilde n}$ for $t\in[t_{u_{\tilde n-1}},t_{u_{\tilde
n}}=T]$, $j=1,2$. Then (\ref{eq2.3}) holds true for the functions
$\tilde y^1,\tilde y^2$ and
$v_p(\sup_{s\leq\cdot}y^1_s-\sup_{s\leq\cdot}y^2_s)_T
=v_p(\sup_{s\leq\cdot}\tilde y^1_s-\sup_{s\leq\cdot}\tilde
y^2_s)_T$,  $v_p(\tilde y^1-\tilde y^2)_T\leq v_p(y^1-y^2)_T$.

It
is clear that there exist numbers $0=i_0<i_1<\ldots<i_m=n$ such
that
\begin{equation}\label{eq2.4}
v_p(\sup_{s\leq\cdot}y^1_s-\sup_{s\leq\cdot}y^2_s)_T
=\s_{k=1}^{m}{|(Y^1_{i_k}-Y^1_{i_{k-1}})
-(Y^2_{i_k}-Y^2_{i_{k-1}})|^p}
\end{equation}
and
\begin{equation}\label{eq2.5}
(Y^1_{i_k}-Y^1_{i_{k-1}})-(Y^2_{i_k}-Y^2_{i_{k-1}})\neq 0
\end{equation}

for  $k=1,\dots,m$. In particular, this implies that  if $m\geq 2$ then for  $k=2,\dots,m$
we have
\begin{equation}\label{eq2.6}
\big((Y^1_{i_{k-1}}-Y^1_{i_{k-2}})-(Y^2_{i_{k-1}}-Y^2_{i_{k-2}})\big)
\big((Y^1_{i_k}-Y^1_{i_{k-1}})-(Y^2_{i_k}-Y^2_{i_{k-1}})\big)<0.
\end{equation}
Indeed, if (\ref{eq2.6}) is not satisfied then by (\ref{eq2.5}),
\begin{align*}
&|(Y^1_{i_{k-1}}-Y^1_{i_{k-2}})-(Y^2_{i_{k-1}}-Y^2_{i_{k-2}})|^p
+|(Y^1_{i_k}-Y^1_{i_{k-1}})-(Y^2_{i_k}-Y^2_{i_{k-1}})|^p\\
&\quad< |(Y^1_{i_{k-1}}-Y^1_{i_{k-2}})-(Y^2_{i_{k-1}}-Y^2_{i_{k-2}})
+ (Y^1_{i_k}-Y^1_{i_{k-1}})-(Y^2_{i_k}-Y^2_{i_{k-1}})|^p\\
&\quad= |(Y^1_{i_k}-Y^1_{i_{k-2}})-(Y^2_{i_k}-Y^2_{i_{k-2}})|^p,
\end{align*}
which contradicts (\ref{eq2.4}). Set $l_k^j=\max\{i\mr
i_k:y^j_i=Y^j_i\}$, $j=1,2$,  and
$l_k^\wedge=\min\{l_k^1,l_k^2\}$, $l_k^\vee=\max\{l_k^1,l_k^2\}$,
$k=1,\dots,m$. Then
\begin{equation}\label{eq2.7}
y_{1,l_k^1}=Y^1_{l_k^1}=Y^1_{l_k^1+1}=\ldots=Y^1_{i_k}\mbox{ and }
y_{2,l_k^2}=Y^2_{l_k^2}=Y^2_{l_k^2+1}=\ldots=Y^2_{i_k}.
\end{equation}
We claim that for any  $k=1,\dots,m$,
\begin{equation}\label{eq2.8}
i_{k-1}\leq l^\wedge_k\leq l^\vee_k\leq i_k.
\end{equation}
The  last two inequalities are obvious. Moreover, $0=i_0\leq
l^\wedge_1$. Assume that there exists $2\le k\le m$ such that
$i_{k-1}>l^\wedge_k$. In what follows we will only  consider the
case $l^\wedge_k=l^1_k$ (the case $l^\wedge_k=l^2_k$ can be
handled in much the same way). We have $i_{k-2}<l^\wedge_k$,
because if $i_{k-2}\geq l^\wedge_k$   then by (\ref{eq2.7}),
\begin{align*}
|(Y^1_{i_{k-1}}-Y^1_{i_{k-2}})-(Y^2_{i_{k-1}}-&Y^2_{i_{k-2}})|^p
+|(Y^1_{i_k}-Y^1_{i_{k-1}})- (Y^2_{i_k}-Y^2_{i_{k-1}})|^p\\
&\quad=|(Y^2_{i_{k-1}}-Y^2_{i_{k-2}})|^p+|(Y^2_{i_k}-Y^2_{i_{k-1}})|^p\\
&\quad<|(Y^2_{i_k}-Y^2_{i_{k-2}})|^p\\
&\quad=|(Y^1_{i_k}-Y^1_{i_{k-2}})-(Y^2_{i_k}-Y^2_{i_{k-2}})|^p,
\end{align*}
which contradicts (\ref{eq2.4}). From the inequality
$i_{k-2}<l^\wedge_k$ and (\ref{eq2.3}) it follows that
\begin{equation}
\label{eq2.9}
Y^1_{i_{k-2}}\leq Y^1_{l_k^1}=Y^1_{l_k^1+1}
=\ldots=Y^1_{i_{k-1}}=\ldots=Y^1_{i_k}
\end{equation}
and
\begin{equation}\label{eq2.10}
Y^2_{i_{k-2}}\leq
Y^2_{l_k^1}<Y^2_{l_k^1+1}<\ldots<Y^2_{i_{k-1}}<\ldots<Y^2_{i_k}.
\end{equation}
Since \[(Y^1_{i_k}-Y^1_{i_{k-1}})-(Y^2_{i_k}-Y^2_{i_{k-1}})
=-(Y^2_{i_k}-Y^2_{i_{k-1}})<0,\]  (\ref{eq2.6}), (\ref{eq2.9}) and
(\ref{eq2.10}) imply that
\begin{align*}
0<(Y^1_{i_{k-1}}-Y^1_{i_{k-2}})-(Y^2_{i_{k-1}}-Y^2_{i_{k-2}}) &=
(Y^1_{l^1_k}-Y^1_{i_{k-2}})-(Y^2_{i_{k-1}}-Y^2_{i_{k-2}}) \\
&<(Y^1_{l^1_k}-Y^1_{i_{k-2}})-(Y^2_{l^1_k}-Y^2_{i_{k-2}}),
\end{align*}
hence that
\begin{equation}\label{eq2.11}
|(Y^1_{i_{k-1}}-Y^1_{i_{k-2}})-(Y^2_{i_{k-1}}-Y^2_{i_{k-2}})|^p
<|(Y^1_{l^1_k}-Y^1_{i_{k-2}})-(Y^2_{l^1_k}-Y^2_{i_{k-2}})|^p.
\end{equation}
Similarly,
\begin{align*}
0<-\big((Y^1_{i_k}-Y^1_{i_{k-1}})-(Y^2_{i_k}-Y^2_{i_{k-1}})\big)&
=(Y^2_{i_k}-Y^2_{i_{k-1}})\\
&<(Y^2_{i_k}-Y^2_{l^1_k})\\
&=(Y^1_{i_k}-Y^1_{l^1_k})-(Y^2_{i_k}-Y^2_{l^1_k}),
\end{align*}
which implies that
\begin{equation}\label{eq2.12}
|(Y^1_{i_k}-Y^1_{i_{k-1}})-(Y^2_{i_k}-Y^2_{i_{k-1}})|^p
<|(Y^1_{i_k}-Y^1_{l^1_k})-(Y^2_{i_k}-Y^2_{l^1_k})|^p.
\end{equation}
Combining (\ref{eq2.11}) with (\ref{eq2.12}) we obtain
\begin{align*}
&|(Y^1_{i_{k-1}}-Y^1_{i_{k-2}})-(Y^2_{i_{k-1}}-Y^2_{i_{k-2}})|^p
+|(Y^1_{i_k}-Y^1_{i_{k-1}})-(Y^2_{i_k}-Y^2_{i_{k-1}})|^p\\
&\quad<|(Y^1_{l^1_k}-Y^1_{i_{k-2}})-(Y^2_{l^1_k}-Y^2_{i_{k-2}})|^p
+|(Y^1_{i_k}-Y^1_{l^1_k})-(Y^2_{i_k}-Y^2_{l^1_k})|^p.
\end{align*}
which contradicts (\ref{eq2.4}) and completes the proof of
(\ref{eq2.8}). It is clear that for any  $k$  we have $
y_{1,l_k^1}\wlr y_{1,l_k^2}$ and $y_{2,l_k^2}\wlr y_{2,l_k^1}$.
Consequently, in the case
$Y^1_{i_k}-Y^1_{i_{k-1}}>Y^2_{i_k}-Y^2_{i_{k-1}}$ we have
\begin{align*}
0<(Y^1_{i_k}-Y^1_{i_{k-1}})-(Y^2_{i_k}-Y^2_{i_{k-1}})&
=(y_{1,l_k^1}-y_{1,l^1_{k-1}})-(y_{2,l^2_k}-y_{2,l^2_{k-1}})\\
&\mr(y_{1,l_k^1}-y_{1,l^2_{k-1}})-(y_{2,l^1_k}-y_{2,l^2_{k-1}}).
\end{align*}
Hence
\begin{equation}\label{eq2.13}
\!\!|(Y^1_{i_k}-Y^1_{i_{k-1}})-(Y^2_{i_k}-Y^2_{i_{k-1}})|^p
\mr|(y_{1,l_k^1}-y_{1,l^2_{k-1}})-(y_{2,l^1_k}-y_{2,l^2_{k-1}})|^p.
\end{equation}
Similarly one can check that
if $Y^1_{i_k}-Y^1_{i_{k-1}}<Y^2_{i_k}-Y^2_{i_{k-1}}$ then
\begin{equation}\label{eq2.14}
\!\!|(Y^1_{i_k}-Y^1_{i_{k-1}})-(Y^2_{i_k}-Y^2_{i_{k-1}})|^p\mr
|(y_{1,l_k^2}-y_{1,l_{k-1}^1})-(y_{2,l^2_k}-y_{2,l^1_{k-1}})|^p.
\end{equation}
By (\ref{eq2.13}) and (\ref{eq2.14}),
\[
\s_{k=1}^{m}|(Y^1_{i_k}-Y^1_{i_{k-1}})-(Y^2_{i_k}-Y^2_{i_{k-1}})|^p
\leq
\s_{k=1}^{m}|(y_{1,l_k}-y_{1,l_{k-1}})-(y_{2,l_k}-y_{2,l_{k-1}})|^p,
\]
where $l_k=l_k^1$ or $l_k=l_k^2$. Moreover, by (\ref{eq2.8}),
$i_{k-1}\leq l_k\leq i_k$ for $k=1,\dots,m$. Hence
\[
v_p(\sup_{s\leq\cdot}y^1_s-\sup_{s\leq\cdot}y^2_s)_T\leq
\s_{k=1}^{m}|(y^1_{t_{l_k}}-y^1_{t_{l_{k-1}}})
-(y^2_{t_{l_k}}-y^2_{t_{l_{k-1}}})|^p\] for some partition
$0=t_{l_0}<t_{l_1}<\ldots<t_{l_m}\leq T$, which proves the theorem
under our additional assumption.

{\em Step 2.} The general case.

Let $\{y^{1,n}\}$ and $\{y^{2,n}\}$ be sequences of
discretizations of $y^1$  and $y^2$, respectively, i.e.
$y^{1,n}_t=y^1_{k/n}$, $y^{2,n}_t=y^{2}_{k/n}$,
$t\in[k/n,(k+1)/n)$, $k\in\No$. By Step 1, for~any~$n\in\N$ and
$T\in\Rp$ we have
\[v_p(\sup_{s\leq\cdot}y^{1,n}_s-\sup_{s\leq\cdot}y^{2,n}_s)_T\leq
v_p (y^{1,n}-y^{2,n})_T.\] Clearly, $v_p(y^{1,n}-y^{2,n})_T\leq
v_p(y^1-y^2)_T$, $n\in\N$, $T\in\Rp$. By using e.g. \cite[Chapter 3, Proposition 6.5]{ek} one can check that
\[y^{1,n}\lra y^1\,\, {\rm and}\,\, y^{2,n}\lra y^2\,\, {\rm in}\,\,\D.\]
 Hence and by \cite[Chapter VI. Proposition 2.2]{js}
\[(y^{1,n},y^{2,n})\lra(y^1,y^2)\,\, {\rm in}\,\,\DD,\] which together with \cite[Chapter VI. Proposition 2.4]{js} implies that
\[
\sup_{s\leq\cdot}y^{1,n}_s-\sup_{s\leq\cdot}y^{2,n}_s
\lra\sup_{s\leq\cdot}y^1_s-\sup_{s\leq\cdot}y^2_s \quad\mbox{\rm in}\,\,\D.
\]
Therefore for any $T$ such that  $\Delta y^1_T=\Delta y^2_T=0$,
\begin{align*}
v_p(\sup_{s\leq\cdot}y^1_s-\sup_{s\leq\cdot}y^2_s)_T
&\leq\liminf_{n\to\infty}
v_p(\sup_{s\leq\cdot}y^{1,n}_s-\sup_{s\leq\cdot}y^{2,n}_s)_T\\
&\leq\sup_n v_p(y^{1,n}-y^{2,n})_T\leq v_p(y^1-y^2)_T.
\end{align*}
If $\Delta y^1_T\neq0$ or $ \Delta y^2_T\neq0$ then there exists a
sequence~$\{T_k\}$ such that $T_k\downarrow T$ and $\Delta
y^1_{T_k}=\Delta y^2_{T_k}=0$, $k\in\N$. Then
$v_p(\sup_{s\leq\cdot}y^1_s-\sup_{s\leq\cdot}y^2_s)_{T_k}\leq
v_p(y^1-y^2)_{T_k}$, $k\in\N$, so letting $k\to\infty$ we obtain
the desired result.
\end{proof}

\begin{theorem}\label{thm2}
Assume  $l,y,l',y'\in\Dd$ are such that $l_0\leq y_0$  and
$l'_0\leq y'_0$. Let $(x, k)=SP_l(y)$ and $(x', k')=SP_{l'}(y')$.
Then for any $T\in\Rp$,
\[
V_p(x-x')_T \leq (d+1) V_p( y-y')_T+d| y_0-y'_0|+
dV_p(l-l')_T+d|l_0-l'_0|\] and \[ V_p(k-k')_T \leq d
V_p(y-y')_T+d|y_0-y'_0|+ dV_p(l-l')_T+d|l_0-l'_0|.
 \]
\end{theorem}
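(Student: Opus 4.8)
The plan is to reduce the multidimensional Skorokhod problem to the one-dimensional explicit formula and then invoke Theorem~\ref{thm1} componentwise.

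First I would recall the explicit representation of the regulator term. For a one-dimensional Skorokhod problem with c\`adl\`ag lower barrier, if $(x,k)=SP_l(y)$ then $k_t=\sup_{s\le t}(y_s-l_s)^-=\left(\sup_{s\le t}(l_s-y_s)\right)\vee 0$, and since here the $d$-dimensional problem decouples into $d$ independent one-dimensional problems (each coordinate $k^i$ reflects only $x^i$ off $l^i$), we get for each $i=1,\dots,d$
\[
k^i_t=\Big(\sup_{s\le t}(l^i_s-y^i_s)\Big)\vee 0,\qquad k'^i_t=\Big(\sup_{s\le t}(l'^i_s-y'^i_s)\Big)\vee 0 .
\]
Thus $k^i$ and $k'^i$ are running suprema (truncated at $0$) of the processes $l^i-y^i$ and $l'^i-y'^i$ respectively.

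Next I would estimate $V_p(k^i-k'^i)_T$. Writing $\sup_{s\le\cdot}w_s$ for the running supremum of $w$, and using that $a\vee 0$ is $1$-Lipschitz (so truncation does not increase $p$-variation — indeed $|(a\vee0)-(b\vee0)|\le|a-b|$ pointwise for increments), I would apply Theorem~\ref{thm1} with $y^1=l^i-y^i$ and $y^2=l'^i-y'^i$ to obtain
\[
v_p(k^i-k'^i)_T\le v_p\big(\sup_{s\le\cdot}(l^i_s-y^i_s)-\sup_{s\le\cdot}(l'^i_s-y'^i_s)\big)_T\le v_p\big((l^i-y^i)-(l'^i-y'^i)\big)_T,
\]
hence $V_p(k^i-k'^i)_T\le V_p(l^i-l'^i)_T+V_p(y^i-y'^i)_T$ by subadditivity of $V_p$ under addition. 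Summing over $i$ (and using $\sum_i V_p(\cdot)^i\le$ a constant times $V_p$ of the vector, or working coordinatewise and then bounding the Euclidean vector $p$-variation by the sum of coordinate ones, which costs a factor at most $d$) yields $V_p(k-k')_T\le d\,V_p(y-y')_T+d\,V_p(l-l')_T$; adding the initial-value discrepancy, since $k_0=k'_0=0$ whereas the bound in the statement carries $|y_0-y'_0|$ and $|l_0-l'_0|$ terms that only help, gives the second inequality. Then for $x$ I would simply use $x-x'=(y-y')+(k-k')$ and subadditivity of $V_p$: $V_p(x-x')_T\le V_p(y-y')_T+V_p(k-k')_T\le (d+1)V_p(y-y')_T+d\,V_p(l-l')_T$, and again append the harmless initial-value terms.

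The one genuine subtlety — the main obstacle — is bookkeeping the passage between the coordinatewise estimates (where Theorem~\ref{thm1} applies to scalar functions) and the Euclidean $p$-variation of the $d$-dimensional difference, so that the constants come out exactly as $(d+1)$ and $d$ rather than something larger; one must be careful that $v_p$ of a vector-valued path is controlled by the sum of the $v_p$ of its coordinates up to the stated factor, and that truncation at zero and the subadditivity $V_p(f+g)\le V_p(f)+V_p(g)$ are applied in the right order. Everything else is a direct consequence of the explicit Skorokhod formula together with Theorem~\ref{thm1}.
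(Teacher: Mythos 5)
Your overall strategy --- the explicit running-supremum formula for $k$, Theorem \ref{thm1} applied coordinatewise, and subadditivity of $V_p$ --- is the same as the paper's, but the step where you dispose of the truncation at zero is wrong, and it is precisely that step which produces the terms $d|y_0-y'_0|+d|l_0-l'_0|$ in the statement. You argue that, since $a\mapsto a\vee 0$ is $1$-Lipschitz, $v_p\bigl(F\vee 0-G\vee 0\bigr)_T\le v_p(F-G)_T$ for $F=\sup_{s\le\cdot}(l^i_s-y^i_s)$ and $G=\sup_{s\le\cdot}(l'^i_s-y'^i_s)$. But the pointwise inequality $|(a\vee0)-(b\vee0)|\le|a-b|$ controls values, not increments of the \emph{difference}, and the claimed variation inequality is false. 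Take $d=1$, $p=1$, $l=l'=0$, and $y,y'$ nonincreasing with $y_0=2$, $y'_0=1$, so that $F=-y$, $G=-y'$ are their own running suprema; choose $(F_0,G_0)=(-2,-1)$, $(F_1,G_1)=(1,3)$, $(F_2,G_2)=(4,4)$. Then $v_1(F-G)_2=|(-2)-(-1)|+|0-(-2)|=3$, while $v_1(F\vee0-G\vee0)_2=|(-2)-0|+|0-(-2)|=4$. So your intermediate bound fails, and the initial-value terms are not ``harmless extras that only help'': they are exactly what is needed to absorb the effect of the truncation (here $v_1(F-G)_2+|F_0-G_0|=4$, and the theorem's bound $V_1(y-y')_2+|y_0-y'_0|=3+1=4$ is attained).

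The paper repairs this with a time-shift device: it writes $k_t=\sup_{s\le 1+t}\bar y_s$, where $\bar y\equiv 0$ on $[0,1)$ and $\bar y_s=l_{s-1}-y_{s-1}$ for $s\ge 1$, so the truncation at zero becomes part of a genuine running supremum and Theorem \ref{thm1} applies on $[0,T+1]$ with no Lipschitz argument; the price is the jump of $\bar y-\bar y'$ at $s=1$, whose size $|(y_0-y'_0)-(l_0-l'_0)|$ is the source of the extra terms. Your remaining bookkeeping is consistent with the paper: the passage from coordinates to the vector norm is done there via $V_p(k-k')_T\le d^{(p-1)/p}\bigl(\sum_i v_p(k^i-k'^i)_T\bigr)^{1/p}\le d\max_i V_p(k^i-k'^i)_T$, and the first inequality of the theorem then follows from $V_p(x-x')_T\le V_p(y-y')_T+V_p(k-k')_T$ exactly as you say.
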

\begin{proof}
Observe that $k_t=\sup_{s\leq t}(y_s-l_s)^-$
$=\sup_{ s\leq 1+t}\bar y_s$, where $\bar y_s=0$ for  $s\in[0,1)$
and $\bar y_s=l_{s-1}-y_{s-1}$  for $s\geq1$.  Similarly,
$k'_t=\sup_{ s\leq 1+t}\bar y'_s$, where $\bar y'_s=0$ for
$s\in[0,1)$ and $\bar y'_s=l'_{s-1}-y'_{s-1}$  for $s\geq1$. By
Theorem \ref{thm1},
\begin{align*}
V_p(k-k')_T&=V_p(\sup_{s\leq\cdot}\bar y_s-\sup_{s\leq\cdot}\bar y'_s)_{T+1}\\
&\leq d^{(p-1)/p}(\sum_{i=1}^dv_p(\sup_{s\leq\cdot}\bar y^i_s
-\sup_{s\leq\cdot}\bar y'^i_s)_{T+1})^{1/p}\\
&\leq d^{(p-1)/p}(\sum_{i=1}^dv_p(  \bar y^i-\bar y'^i)_{T+1})^{1/p}\\
&\leq d\max_i V_p(\bar y^i-\bar y'^i)_{T+1} \leq dV_p(\bar y-\bar
y')_{[0,T+1]}.
\end{align*}
Since
\begin{align*}
V_p(\bar y-\bar y')_{T+1}&\leq V_p(\bar y-\bar y')_{[0,1]}
+ V_p(\bar y-\bar y')_{[1,T+1]}\\
&= |(y_0-y'_0)-(l_0-l'_0)|+V_p((y-y')-(l-l'))_T\\
&\leq V_p(y-y')_T+|y_0-y'_0|+ V_p(l-l')_T+|l_0-l'_0|
\end{align*}
and
\[
V_p(x-x')_T \leq V_p(y-y')_T+ V_p( k-k')_T,
\]
the proof is complete.
\end{proof}

\begin{corollary} \label{cor1}
Under the assumptions of  Theorem \ref{thm2}, for every $T\in\Rp$
the estimates (\ref{eq1.2}), (\ref{eq1.3}) hold true.
\end{corollary}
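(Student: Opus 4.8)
The plan is to deduce Corollary \ref{cor1} directly from Theorem \ref{thm2} by bookkeeping the initial values, since $\bar V_p(\cdot)_T$ differs from $V_p(\cdot)_T$ only by the value at $0$. First I would record the relevant initial conditions: by part (ii) of the Skorokhod problem $k_0=k'_0=0$, so that $\bar V_p(k-k')_T=V_p(k-k')_T$; and by part (i) together with $k_0=k'_0=0$ we get $x_0=y_0+k_0=y_0$ and likewise $x'_0=y'_0$, whence $|x_0-x'_0|=|y_0-y'_0|$ and $\bar V_p(x-x')_T=V_p(x-x')_T+|y_0-y'_0|$.

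Next I would use the defining identities $\bar V_p(y-y')_T=V_p(y-y')_T+|y_0-y'_0|$ and $\bar V_p(l-l')_T=V_p(l-l')_T+|l_0-l'_0|$. For (\ref{eq1.3}), insert the bound on $V_p(k-k')_T$ from Theorem \ref{thm2} and regroup the four terms $dV_p(y-y')_T+d|y_0-y'_0|+dV_p(l-l')_T+d|l_0-l'_0|$ on its right-hand side as $d\bar V_p(y-y')_T+d\bar V_p(l-l')_T$, which by the first paragraph equals an upper bound for $\bar V_p(k-k')_T$. For (\ref{eq1.2}), add $|y_0-y'_0|$ to both sides of the bound on $V_p(x-x')_T$ from Theorem \ref{thm2}; the right-hand side becomes $(d+1)V_p(y-y')_T+(d+1)|y_0-y'_0|+dV_p(l-l')_T+d|l_0-l'_0|$, which is exactly $(d+1)\bar V_p(y-y')_T+d\bar V_p(l-l')_T$, and the left-hand side is $\bar V_p(x-x')_T$.

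Since every step is an algebraic rearrangement of inequalities already established in Theorem \ref{thm2}, there is no genuine obstacle here; the only point requiring care is the correct identification of the initial values $k_0=k'_0=0$ and $x_0=y_0$, $x'_0=y'_0$, so that the constants in front of the $\bar V_p$-norms come out as $(d+1)$ and $d$ rather than being shifted.
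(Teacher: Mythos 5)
Your proposal is correct and follows exactly the paper's own argument: the paper's proof consists precisely of the observation that $x_0=y_0$, $x'_0=y'_0$ and $k_0=k'_0=0$, after which the passage from the $V_p$ bounds of Theorem \ref{thm2} to the $\bar V_p$ bounds is the algebraic regrouping you spell out. You have simply written out the bookkeeping the paper leaves implicit.
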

\begin{proof} It suffices to observe that $x_0=y_0$,
$x'_0=y'_0$ and $k_0=k'_0=0$.
\end{proof}

\begin{remark}\label{rem1}
(a) The case $p=d=1$ was studied earlier in \cite{sw} (see
also \cite{sa}).

(b) Let $(x^n,k^n)=SP_{l^n}(y^n)$, $(x,k)=SP_l(y)$. By
(\ref{eq2.1})  and (\ref{eq2.2}) it is clear  that if $(y^n,l^n)$
tends to $(y,l)$ in the uniform norm then  $(x^n,k^n)$ tends to
$(x,k)$ in the uniform norm. From this one can deduce that
\[
(y^n,l^n)\rightarrow(y,l)\,\,\mbox{\rm in}\,\,
\Ddd\Rightarrow(x^n,k^n,y^n,l^n)
\rightarrow(x,k,y,l)\,\,\mbox{\rm in}\,\,\Dcd
\]
and if $\{(y^n,l^n)\}$ is relatively compact in $\Ddd$ then
$\{(x^n,k^n,y^n,l^n)\}$ is relatively compact in $\Dcd$ (see e.g.
\cite{sw}). From  Corollary \ref{cor1} it also follows  that if
$(y^n,l^n)$ tends to $(y,l)$ in  the variation norm then
$(x^n,k^n)$ tends to $(x,k)$ in the variation norm.
\end{remark}
\begin{remark}\label{cor2}
 Let $(x, k)=SP_l(y)$. Since $k_t=\sup_{s\leq t}(y_s-l_s)^-$, for any $T\in\Rp$ we have
\[
\bar V_p(k)_T\leq d\sup_{t\leq T}|y_t| +d\sup_{t\leq T}|l_t|.
\]
and
\[
\bar V_p(x)_T\leq(d+1)\bar V_p(y)_T +d\sup_{t\leq T}|l_t|.\]

\end{remark}

\section{Deterministic integral equations}

Let  $x\in\DMd$, $z\in\Dd$  be such that  $V_q(x)_T<\infty$,
$V_p(z)_T<\infty$, $T\in\Rp$, where $1/p+1/q>1$, $p,q\geq1$. It
is well known  (see, e.g., \cite{du,dn,dn1,y} ) that the Riemann-Stiltjes integral
$\int_0^\cdot x_{s-} dz_s$ is a well defined c\`adl\`ag function
such that for any $a<b$,
\begin{equation}\label{eq3.2}
V_p(\int_a^\cdot x_{s-} dz_s)_{[a,b]}\leq C_{p,q}\bar
V_q(x)_{[a,b)}V_p(z)_{[a,b]},
\end{equation}
where $C_{p,q}=\zeta(p^{-1}+q^{-1})$ and $\zeta$ denotes the
Riemann zeta function, i.e. $\zeta(x)=\sum_{n=1}^\infty 1/n^x$.

Let $a\in\D$, $z,l\in\Dd$ be such that $V_1(a)_T$, $V_p(z)_T$,
 $T\in\Rp$, and $x_0\geq l_0$. We  consider
equations with constraints of the form
\begin{equation}\label{eq3.1}
x_t=x_0+\int_0^t f(x_{s-})\,da_s+\int_0^tg(x_{s-})\,dz_s
+k_t,\quad t\in\Rp,
\end{equation}
where $f:\Rd\rightarrow\Rd$  and $g:\Rd\rightarrow\Md$ are given
functions and the integral with respect to $z$ is a Riemann-Stieltjes integral.
\begin{definition}\label{def}
 We say that a~pair $(x,k)\in\Ddd$ is a solution of
(\ref{eq3.1}) if $V_p(x)_T<\infty$, $T\in\Rp$,  and
$(x,k)=SP_l(y)$, where
\[
y_t=x_0+\int_0^t f(x_{s-})\,da_s+\int_0^tg(x_{s-})\,dz_s,\quad t\in\Rp.
\]

\end{definition}

We will need the following conditions.
\begin{enumerate}
\item[(H1)] (a) $f:\Rd\rightarrow\Rd$  is continuous and
satisfies the linear  growth condition, i.e. there is $L>0$ such that
\[
|f(x)|\leq L(1+|x|),\quad x\in\Rd.
\]

(b)  $g:\Rd\rightarrow\Md$  is H\"older continuous function of
order   $\alpha\in(p-1,1]$, i.e. there is $C_{\alpha}>0$ such that
\[||g(x)-g(y)||\leq C_\alpha |x-y|^\alpha,\quad
x,y\in\Rd.\]
\item[(H2)] (a) $f:\Rd\rightarrow\Rd$  is locally Lipschitz
continuous, i.e. for any $k\in\N$ there is $L_k>0$ such that
\[|f(x)-f(y)|\leq L_k|x-y|,\quad |x|,|y| \le k.\]

(b) $g:\Rd\rightarrow\Md$, its each component $g_{i,j}$ is
differentiable and  there are $\gamma\in(p-1,1]$ and
$C_{k,\gamma}>0$ such that for every $k\in\N$,
\[
|\nabla_xg_{i,j}(x)-\nabla_xg_{i,j}(y)|\leq C_{k,\gamma} |x-y|^\gamma,\quad
|x|,|y| \le k,\,\, i,j=1,\dots,d.
\]
\end{enumerate}
Similar sets of conditions were considered in papers on
equations without constraints driven by functions (processes) with
bounded $p$-variation (see, e.g., \cite{du,fsz,k1,k2,ly,nr,ru}).

The outline of the rest of Section 3 is as follows. First we study
convergence of solutions of equations of the type (\ref{eq3.1}) in
the Skorokhod topology $J_1$. In our proofs we use a general
tightness criterion and a functional limit theorem for  sequences of
integrals with respect to c\`adl\`ag functions (see Appendix). As a
simple corollary to our convergence result we show that under (H1)
there exists a solution of (\ref{eq3.1}).  Next, assuming
additionally (H2), we prove that (\ref{eq3.1}) has a unique
solution $(x,k)$. Under (H1)  and (H2), we show at the end of Section 3,
 that $(x,k)$ can by approximated by simple  and easy to
implement approximation schemes.

Let $z^n,l^n\in\Dd$, $a^n\in\D$ be such that $x^n_0\geq l^n_0$  and
$V_1(a^n)_T$, $V_p(z^n)_T$ $<\infty$, $T\in\Rp$.
We will consider solutions $(x^n,k^n)\in\Ddd$ of equations with
constraints of the form
\begin{equation}\label{eq3.11}
x^n_t=x^n_0+\int_0^t
f(x^n_{s-})\,da^n_s+\int_0^tg(x^n_{s-})\,dz^n_s +k^n_t,\quad
t\in\Rp,
\end{equation}
i.e.  $(x^n,k^n)=SP_{l^n}(x^n_0+\int_0^\cdot
f(x^n_{s-})\,da^n_s+\int_0^\cdot g(x^n_{s-})\,dz^n_s)$ and
$V_p(x^n)_T<\infty$, $T\in\Rp$.

\begin{theorem}\label{thm3}
Suppose that functions $f$ and $g$ satisfy \mbox{\rm(H1)}. Let $\{a^n\}\subset \D$,
$\{z^n\}$, $\{l^n\}$ $\subset\Dd$ be sequences such that
$\sup_n V_1(a^n)_T<\infty$, $\sup_n V_p(z^n)_T<\infty$, $T\in\Rp$ and
\[(\,x^n_0,a^n,\,z^n,\,l^n\,)\longrightarrow\,(\,x_0,a,\,z\,,\,l\,)\,\, in\,\,\Rd\times\Dddt.\]
If $\{(x^n,k^n)\}$ is a sequence of solutions of (\ref{eq3.11})  then
\[
\{(x^n\,,k^n)\}\quad is \,\,relatively\,\, compact\,\, in\,\,\Ddd
\]
and its every limit point is a solution of (\ref{eq3.1}).
\end{theorem}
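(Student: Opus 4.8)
The plan is to establish relative compactness of $\{(x^n,k^n)\}$ in $\Ddd$ first, then identify the limit points as solutions of (\ref{eq3.1}). For relative compactness, I would begin by deriving a uniform $p$-variation bound on $\{x^n\}$. Write $y^n_t = x^n_0+\int_0^t f(x^n_{s-})\,da^n_s+\int_0^t g(x^n_{s-})\,dz^n_s$, so $(x^n,k^n)=SP_{l^n}(y^n)$. Using Remark \ref{cor2} (the a priori bound on $SP$) we get $\bar V_p(x^n)_T \leq (d+1)\bar V_p(y^n)_T + d\sup_{t\leq T}|l^n_t|$, and the last term is bounded by hypothesis. For the $f$-integral, the linear growth condition in (H1)(a) gives $V_p(\int_0^\cdot f(x^n_{s-})\,da^n_s)_T \leq V_1(\int_0^\cdot f(x^n_{s-})\,da^n_s)_T \leq \sup_{s\leq T}|f(x^n_s)|\,V_1(a^n)_T \leq L(1+\sup_{s\leq T}|x^n_s|)\sup_n V_1(a^n)_T$. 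For the $g$-integral I would apply the Young/Loéve-type estimate (\ref{eq3.2}) with $q$ chosen so that $1/p + 1/q > 1$ — which is possible because (H1)(b) gives $\alpha$-Hölder continuity of $g$ with $\alpha > p-1$, so that $g(x^n)$ has bounded $q$-variation with $q = p/\alpha < p/(p-1)$, i.e. $1/q = \alpha/p > (p-1)/p = 1 - 1/p$. This yields $V_p(\int_0^\cdot g(x^n_{s-})\,dz^n_s)_T \leq C_{p,q}\bar V_q(g(x^n))_{[0,T)}\,V_p(z^n)_T$, and $\bar V_q(g(x^n))_T \leq \|g(0)\| + C_\alpha\bar V_p(x^n)_T^\alpha$ by Hölder continuity (using $V_q(g(x^n))_T = V_p(x^n)_T^\alpha \cdot (\text{const})$ more carefully via the definition of $p$-variation). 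Collecting these, and using $\sup_{s\leq T}|x^n_s| \leq \bar V_p(x^n)_T$, I obtain an inequality of the form $\bar V_p(x^n)_T \leq C_1 + C_2\bar V_p(x^n)_T^\alpha$ with constants uniform in $n$; since $\alpha \leq 1$, this forces $\sup_n \bar V_p(x^n)_T < \infty$ for every $T$.

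Next I would upgrade this $p$-variation bound to tightness in the $J_1$ topology. Here I would invoke the general tightness criterion promised in the Appendix: a sequence with uniformly bounded $p$-variation on compacts, whose jumps are controlled (inherited from the convergence $z^n \to z$, $a^n \to a$ and the regularity of $f, g$), is relatively compact in $\Dd$ with $J_1$. Applying this to $\{y^n\}$ — whose $p$-variation is uniformly bounded by the same estimates — and then using Remark \ref{rem1}(b), which says the Skorokhod map preserves relative compactness in the $J_1$ topology, gives relative compactness of $\{(x^n,k^n,y^n)\}$, hence of $\{(x^n,k^n)\}$, in the appropriate product Skorokhod space. Together with the assumed convergence of $(x^n_0,a^n,z^n,l^n)$, the quadruple $\{(x^n,k^n,y^n,a^n,z^n,l^n)\}$ is relatively compact.

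For the identification step, take any subsequence along which $(x^n,k^n,y^n,a^n,z^n,l^n) \to (x,k,y,a,z,l)$ in the product Skorokhod space; I must show $(x,k)$ solves (\ref{eq3.1}), i.e. that $y_t = x_0 + \int_0^t f(x_{s-})\,da_s + \int_0^t g(x_{s-})\,dz_s$, that $V_p(x)_T < \infty$, and that $(x,k) = SP_l(y)$. The $p$-variation bound passes to the limit by lower semicontinuity of $V_p$ under $J_1$ convergence (as used at the end of the proof of Theorem \ref{thm1}), giving $V_p(x)_T < \infty$. That $(x,k) = SP_l(y)$ follows from Remark \ref{rem1}(b): the Skorokhod map is continuous for $J_1$ on the relevant set, so the limit of $(x^n,k^n) = SP_{l^n}(y^n)$ is $SP_l(y)$. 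The genuinely delicate point — and the main obstacle — is passing to the limit in the two integral terms, i.e. showing $\int_0^\cdot f(x^n_{s-})\,da^n_s \to \int_0^\cdot f(x_{s-})\,da_s$ and $\int_0^\cdot g(x^n_{s-})\,dz^n_s \to \int_0^\cdot g(x_{s-})\,dz_s$ in $J_1$. For this I would invoke the functional limit theorem for sequences of Riemann–Stieltjes integrals with respect to càdlàg functions of bounded $p$-variation stated in the Appendix: joint $J_1$ convergence of integrands and integrators, together with the uniform $p$- and $q$-variation bounds and continuity of $f$ and $g$ (so that $f(x^n) \to f(x)$ and $g(x^n) \to g(x)$ in $J_1$, which requires care because $J_1$ convergence is not preserved by all continuous maps — one needs the limits $x$ to have no common jump times with the relevant "bad" sets, a point typically handled by restricting $T$ to a co-countable set and then taking $T_k \downarrow T$ as at the end of Theorem \ref{thm1}). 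Granting that Appendix result, the identification is complete and every limit point of $\{(x^n,k^n)\}$ solves (\ref{eq3.1}).
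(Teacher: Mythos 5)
Your overall architecture matches the paper's: an a priori $p$-variation bound, the Appendix tightness and limit theorems for Young integrals, and continuity of the Skorokhod map from Remark \ref{rem1}(b). However, there is a genuine gap in the first step. You close the a priori estimate by arriving at $\bar V_p(x^n)_T \leq C_1 + C_2\,\bar V_p(x^n)_T^{\alpha}$ and asserting that ``since $\alpha\leq 1$ this forces'' a uniform bound. That self-referential inequality yields a bound only when $\alpha<1$; condition (H1)(b) explicitly allows $\alpha=1$ (indeed $\alpha\in(p-1,1]$), and for $\alpha=1$ the inequality $u\leq C_1+C_2u$ gives nothing unless $C_2<1$, which you cannot guarantee since $C_2$ contains $\sup_nV_p(z^n)_T$. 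The paper avoids this by a localization/chaining argument: it first linearizes $V_p^{\alpha}(x)_t\leq \alpha\bar V_p(x)_t+(1-\alpha)$, then introduces stopping times $t_k$ chosen so that $LV_1(a)_{[t_{k-1},t]}$ and $DV_p(z)_{[t_{k-1},t]}$ stay below $\tfrac{1}{4(d+1)}$, making the coefficient in front of $\bar V_p(x)$ equal to $\tfrac12$ on each piece (with a separate bound for the jump at each $t_k$), and finally shows the number $m$ of pieces is finite using superadditivity of $V_1$ and $v_p$ over disjoint intervals. Without some version of this argument your uniform bound $\sup_n\bar V_p(x^n)_T<\infty$ is not established in the full generality of (H1).

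A second, smaller issue: to apply the Appendix results to the drift term $\int_0^{\cdot}f(x^n_{s-})\,da^n_s$ you need a uniform $q$-variation bound on the integrand $f(x^n)$, but $f$ is only assumed continuous with linear growth, and composition with a merely continuous map does not preserve bounded $p$-variation. The paper handles this by approximating $f$ uniformly on compacts by Lipschitz functions $f^k$, for which $\sup_n\bar V_p(f^k(x^n))_T<\infty$, applying Corollary \ref{cora1} to the $f^k$-integrals, and then passing to the limit in $k$ using $\lim_k\limsup_n\sup_{t\leq T}|f^k(x^n_t)-f(x_t)|=0$. Your sketch does not address this point; the rest of your identification step (Corollary \ref{cora2}, lower semicontinuity of $V_p$, continuity of $SP$) is in line with the paper.
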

\begin{proof}
The lemma below will be our main tool in the proof.

\begin{lemma}\label{lem}
Assume $f$ and $g$ satisfy \mbox{\rm(H1)}.  Let $(x,k)$ be a solution of
(\ref{eq3.1}) and let $b,T>0$. If
\[\max(V_1(a)_T,V_p(z)_T,\sup_{t\leq T}|l_t|)\leq b\] then there is  $\bar
C=C(d,p,\alpha,L,g(0),x_0,b)>0$ such that $\bar V_p(x)_T\leq \bar
C$.
\end{lemma}
\begin{proof}
By Remark \ref{cor2}, for any $t\leq T$,
\begin{align*}
\bar V_{p}(x)_t&\leq (d+1)\bar V_p(y)_t+d\sup_{s\leq t}|l_s|\\
& \leq (d+1)\left[|x_0|+V_p(\int_0^\cdot
f(x_{s-})\,da_s)_t+V_p(\int_0^\cdot
g(x_{s-})\,dz_s)_t\right]+d\sup_{s\leq t}|l_s|.
\end{align*}
We have
\[
V_p(\int_0^\cdot f(x_{s-})\,da_s)_t\leq V_1(a)_t\,\sup_{s\leq
t}|f(x_{s-})| \leq LV_1(a)_t\,(1+\bar V_p(x)_t)
\]
and, by (\ref{eq3.2}),
\begin{align*}
V_p(\int_0^\cdot g(x_{s-})\,dz_s)_t&\leq
C_{p,p/\alpha}\bar V_{p/\alpha}(g(x))_tV_p(z)_t\\
&\leq C_{p,p/\alpha}(C_{\alpha} V^\alpha_{p}(x)_t+C_{\alpha}|x_0|^\alpha+|g(0)|)V_p(z)_t\\
&\leq C_{p,p/\alpha}[C_{\alpha}(\alpha\bar V_{p}(x)_t+2(1-\alpha))+|g(0)|]V_p(z)_t\\
&\leq DV_p(z)_t(1+\bar V_p(x)_t),
\end{align*}
where $D=C_{p,p/\alpha}(C_\alpha(2-\alpha)+|g(0)|)$.

Set $t_1=\inf\{t;LV_1(a)_t>\frac1{4(d+1)}\,\mbox{ or }
DV_p(z)_t>\frac1{4(d+1)}\}\wedge T$.
By the above,
\[
\bar V_{p}(x)_{[0,t_1)} \leq (d+1)|x_0|
+\frac12 (1+\bar V_p(x)_{[0,t_1)})+d\sup_{s\leq t_1}|l_s|,
\]
which implies that $\bar V_{p}(x)_{[0,t_1)} \leq
2(d+1)|x_0|+1+2d\sup_{s\leq t_1}|l_s|$. Since
\begin{align*}
|\Delta x_{t_1}|&\leq |f(x_{t_1-})\Delta a_{t_1}|
+|g(x_{t_1-})\Delta z_{t_1}|+|\Delta l_{t_1}|\\
&\leq \big(L(1+|x_{t_1-}|)+C_\alpha|x_{t_1-}|^\alpha+|g(0)|+2\big)b,
\end{align*}
there exist $ C_1,C_2>0$ depending only on $d,p,\alpha,L,g(0),b$ such that
\[
\bar V_{p}(x)_{[0,t_1]} \leq C_1+C_2|x_0|.
\]
Set
$t_k=\inf\{t>t_{k-1};LV_1(a)_{[t_{k-1},t]}>\frac1{4(d+1)} \mbox{
or } DV_p(z)_{[t_{k-1},t]}>\frac1{4(d+1)}\}\wedge T$, $k=2,3,\dots$,
and observe that for the same constants $C_1,C_2$,
\[
V_{p}(x)_{[t_{k-1},t_k]} \leq C_1+C_2|x_{t_{k-1}}|
\leq C_1+C_2\bar V_p(x)_{[0,t_{k-1}]}.
\]
What is left  is to show  that $m=\inf\{k;t_k=T\}$ is finite and
depends only on $p,\alpha,L,g(0),b$. To see this, let us observe that
\begin{align*}
m\left(\frac{1}{4(d+1)}\right)^p&\leq \sum_{k=1}^m
LV_1(a)_{[t_{k-1},t_k]}
+D^p v_p(z)_{[t_{k-1},t_k]}\\
&\leq Lb+D^p b^p,
\end{align*}
which  yields $m\leq(4(d+1))^p[Lb+D^pb^p]$. This completes the proof.
\end{proof}
\medskip

By Lemma \ref{lem} $\sup_n \bar V_p(x^n)_T<\infty$.
Since $\bar V_{p/\alpha}(g(x^n))_T\leq
C_{\alpha}V^\alpha_p(x^n)_T+|g(x^n_0)|$, we also have
\begin{equation}\label{eq3.12}
\sup_n \bar V_{p/\alpha}(g(x^n))_T<\infty,\quad T\in\Rp.
\end{equation}
Since $f$ is continuous, there exists a sequence of Lipschitz
continuous functions $\{f^k\}$ such that for any compact
$K\subset\Rd$,  $\sup_{u\in K}|f^k(u)-f(u)|\rightarrow0$. Hence and from the fact that
$\sup_n \bar V_p(x^n)_T<\infty$, $T\in\Rp$, it follows  that
\begin{equation}
\label{eq3.13} \sup_n \bar V_p(f^k(x^n))_T<\infty, \quad
T\in\Rp,\,\, k\in\N
\end{equation}
and
\begin{equation}
\label{eq3.14} \lim_{k\to\infty}\limsup_{n\to\infty}\sup_{t\leq
T}|f^k(x^n_t)-f(x_t)|=0, \quad T\in\Rp.\end{equation}  Putting $q=p/\alpha$ in Corollary \ref{cora1} and using (\ref{eq3.12}), (\ref{eq3.13}) we show that for any
$k\in\N$,
\[
\{(\int_0^\cdot f^k(x^n_{s-})\,da^n_s,a^n,
\int_0^\cdot g(x^n_{s-})\,dz^n_s,z^n)\}\,
\mbox{\rm is relatively compact in}\,\Ddddt.
\]
By the above and (\ref{eq3.14}),
\[
\{(\int_0^\cdot f(x^n_{s-})\,da^n_s,a^n,
\int_0^\cdot g(x^n_{s-})\,dz^n_s,z^n)\}\,
\mbox{\rm is relatively compact in}\,\Ddddt.
\]
Therefore, $\{(a^n,y^n,z^n,l^n)\}$, with
$y^n=x^n_0+\int_0^\cdot f(x^n_{s-})\,da^n_s+\int_0^\cdot
g(x^n_{s-})\,dz^n_s$,
  is relatively
compact in $\Ddddt$. Now, put $(x^n,k^n)=SP_{l^n}(y^n)$, $n\in\N$,
and observe that by Remark \ref{rem1}(b),
\[
\{(x^n,a^n,z^n,l^n)\}\quad\mbox{\rm is relatively compact
in}\,\,\Ddddt.
\]
Assume that $(x^n,\,a^n,\,z^n,\,l^n)\rightarrow(x,\,a,\,z,\,l)$ in
$\Ddddt$. By  Corollary \ref{cora2}, $(y^n,l^n)\rightarrow(y,l)$,
where $y=x_0+\int_0^\cdot f(x_{s-})\,da_s+\int_0^\cdot
g(x_{s-})\,dz_s$. Consequently, by Remark \ref{rem1}(a),
\[(x^n,k^n)=SP_{l^n}(y^n)\rightarrow SP_l(y)=(x,k)\,\,
\text{in}\,\, \Ddd.\]
\end{proof}

\begin{corollary}\label{cor3}
Assume $f$, $g$ satisfy  \mbox{\rm(H1)} and let $a\in\D$, $z,l\in\Dd$ be such  that
$V_1(a)_T<\infty$, $V_p(z)_T<\infty$ with
$x_0\geq l_0$. Set $x^n_0=x_0$, $k^n_0=0$ and
\[
\left\{\begin{array}{ll}
\Delta y^n_{(k+1)/n}&=f(x^n_{n/k})(a_{(k+1)/n}-a_{k/n})
+g(x^n_{k/n})(z_{(k+1)/n}-z_{k/n}), \\
x^n_{(k+1)/n}&=\max\big(x^n_{k/n}+\Delta y^n_{(k+1)/n},l_{(k+1)/n}\big),\\
k^n_{(k+1)/n}&=k^n_k+(x^n_{(k+1)/n}-x^n_{k/n})-\Delta
y^n_{(k+1)/n}
\end{array}
\right.
\]
and $x^n_t=x^n_{k/n}$, $k^n_t=k^n_{k/n}$, $l^n_t=l^n_{k/n}$,
$t\in[k/n,(k+1)/n)$, $k\in\N\cup\{0\}$. Then $\{(x^n,k^n)\}$  is
relatively compact in $\Ddd$  and its every limit point is a
solution of (\ref{eq3.1}). Consequently, equation (\ref{eq3.1})
has a solution (possibly nonunique).
\end{corollary}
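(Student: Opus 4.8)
The plan is to recognise the proposed recursion as a particular instance of equation (\ref{eq3.11}) driven by discretized data, and then to invoke Theorem \ref{thm3}. Fix $n\in\N$ and let $a^n\in\D$, $z^n,l^n\in\Dd$ be obtained by freezing $a$, $z$, $l$ along the grid of mesh $1/n$, that is $a^n_t=a_{k/n}$, $z^n_t=z_{k/n}$, $l^n_t=l_{k/n}$ for $t\in[k/n,(k+1)/n)$, $k\in\No$; then $l^n_0=l_0\leq x_0=x^n_0$, and since $a^n$, $z^n$ take only values already attained by $a$, $z$ and change only at grid points, $V_1(a^n)_T\leq V_1(a)_T$ and $V_p(z^n)_T\leq V_p(z)_T$ for every $n$ and $T$. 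Because $a^n$ jumps only at the points $j/n$, $j\geq1$, with increment $a_{j/n}-a_{(j-1)/n}$ (and likewise for $z^n$) while $x^n_{(j/n)-}=x^n_{(j-1)/n}$, evaluating the Riemann--Stieltjes integrals against these step integrators gives, for $t=k/n$,
\[
y^n_t:=x^n_0+\int_0^t f(x^n_{s-})\,da^n_s+\int_0^t g(x^n_{s-})\,dz^n_s
= x_0+\sum_{(j+1)/n\leq t}\Delta y^n_{(j+1)/n},
\]
so $y^n$ is the step function with these increments.

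I would then identify $(x^n,k^n)$ with $SP_{l^n}(y^n)$. As $y^n$ and $l^n$ are constant on each interval $[k/n,(k+1)/n)$, the componentwise Skorokhod formula $k^i_t=\sup_{s\leq t}(y^{n,i}_s-l^{n,i}_s)^-$ recalled in Section 2 produces a process that is constant on those intervals too, and a straightforward induction on $k$ shows that at the grid points it coincides with the $k^n$ generated by the recursion and that $x^n=y^n+k^n$; hence $(x^n,k^n)=SP_{l^n}(y^n)$. Since $x^n$ is a step function, $V_p(x^n)_T<\infty$ for every $T$, so $(x^n,k^n)$ is indeed a solution of (\ref{eq3.11}) with data $(x^n_0,a^n,z^n,l^n)$. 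It remains to check the convergence hypothesis of Theorem \ref{thm3}: one has $a^n\to a$ in $\D$ and $z^n\to z$, $l^n\to l$ in $\Dd$ (for instance by \cite[Chapter 3, Proposition 6.5]{ek}), and because the common grid $\{k/n:k\in\No\}$ is used for all coordinates this convergence is joint, $(a^n,z^n,l^n)\to(a,z,l)$ in $\Dddt$, exactly as in the proof of Theorem \ref{thm1}; together with $x^n_0=x_0$ and the uniform variation bounds just noted, all hypotheses of Theorem \ref{thm3} are satisfied.

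Theorem \ref{thm3} now yields that $\{(x^n,k^n)\}$ is relatively compact in $\Ddd$ and that each of its limit points is a solution of (\ref{eq3.1}); in particular some subsequence converges, and its limit is a solution, which gives the final assertion. The only slightly delicate points in this argument are the identification of the discrete recursion with $SP_{l^n}(y^n)$ via the explicit Skorokhod formula and the passage from coordinatewise to joint $J_1$-convergence of the discretized data; both are routine precisely because one common grid is used for all coordinates, and neither introduces any difficulty beyond what has already been handled in Sections 2 and 3.
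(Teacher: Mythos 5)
Your proposal is correct and follows essentially the same route as the paper: the authors likewise observe that $(x^n,k^n)$ solves (\ref{eq3.11}) with the frozen data $a^n_t=a_{k/n}$, $z^n_t=z_{k/n}$, $l^n_t=l_{k/n}$, note the uniform variation bounds and the convergence $(a^n,z^n,l^n)\to(a,z,l)$ in $\Dddt$, and then apply Theorem \ref{thm3}. The additional details you supply (the explicit identification of the recursion with $SP_{l^n}(y^n)$ via the formula $k^i_t=\sup_{s\leq t}(y^{n,i}_s-l^{n,i}_s)^-$, and the passage to joint $J_1$-convergence along the common grid) are exactly the steps the paper leaves implicit.
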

\begin{proof}
It suffices to observe that $(x^n,k^n)$ is a solution of
(\ref{eq3.11}) with $a^n_t=a_{k/n}$, $z^n_t=z_{k/n}$,
$l^n_t=l_{k/n}$, $t\in[k/n,(k+1)/n)$, $k\in\N\cup\{0\}$. Also
observe that $\sup_n V_1(a^n)_T\leq V_1(a)_T<\infty$, $\sup_n
V_p(z^n)_T\leq V_p(z)_T<\infty$ for $T\in\Rp$  and
\[
(a^n,z^n,l^n)\longrightarrow (a,z,l)\quad\mbox{\rm in}\,\,\Dddt.
\]
Therefore the result follows from Theorem \ref{thm3}.
\end{proof}

\begin{theorem}\label{thm4}
Assume $f$, $g$ satisfy \mbox{\rm(H1)} and  \mbox{\rm(H2)}.
Then there exists a unique solution $(x,k)$ of (\ref{eq3.1}).
\end{theorem}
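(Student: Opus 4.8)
Since $f$ and $g$ satisfy (H1), Corollary \ref{cor3} already provides a solution, so only uniqueness remains, and here the extra assumption (H2) is used. The plan is to localize to a bounded region of $\Rd$ — on which (H2) supplies Lipschitz-type constants — and then to patch together finitely many short time intervals, on each of which the Lipschitz estimate of Corollary \ref{cor1} turns (\ref{eq3.1}) into a contraction for the difference of two solutions. This is the exact analogue, for differences of solutions, of the a~priori bound proved in Lemma \ref{lem}: because $C_{p,q}$ in (\ref{eq3.2}) exceeds $1$, no single interval can be a contraction, and the splitting into short intervals is forced just as there.

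Let $(x,k)$ and $(x',k')$ be two solutions of (\ref{eq3.1}) with the same data $x_0,a,z$ and the same lower barrier $l$, and fix $T>0$. By Definition \ref{def}, $V_p(x)_T,V_p(x')_T<\infty$, hence $R:=\sup_{t\le T}(|x_t|\vee|x'_t|)<\infty$; fix an integer $N\ge R$ and let $L_N,C_{N,\gamma}$ be the constants from (H2) on the closed ball of radius $N$. Put $q=p/\gamma$; since $\gamma\in(p-1,1]$ we have $1/p+1/q=(1+\gamma)/p>1$ and $q\ge p$, so (\ref{eq3.2}) applies and $V_q(h)\le V_p(h)$ for every $h$. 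One then needs two ``difference'' estimates, valid for $0\le a<b\le T$. Since $f$ is locally Lipschitz, exactly as in Lemma \ref{lem},
\[
V_p\Big(\int_a^\cdot\big(f(x_{s-})-f(x'_{s-})\big)\,da_s\Big)_{[a,b]}\le L_N\,V_1(a)_{[a,b]}\,\sup_{a\le s\le b}|x_s-x'_s|,
\]
and by (\ref{eq3.2}),
\[
V_p\Big(\int_a^\cdot\big(g(x_{s-})-g(x'_{s-})\big)\,dz_s\Big)_{[a,b]}\le C_{p,q}\,\bar V_q\big(g(x)-g(x')\big)_{[a,b)}\,V_p(z)_{[a,b]}.
\]

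The remaining, and genuinely technical, point is a bound for $\bar V_q(g(x)-g(x'))$. I would write $g_{ij}(u)-g_{ij}(v)=H_{ij}(u,v)\cdot(u-v)$ with $H_{ij}(u,v)=\int_0^1\nabla_xg_{ij}(v+\theta(u-v))\,d\theta$; by (H2)(b), $\sup_{|w|\le N}|\nabla_xg_{ij}(w)|=:M_N<\infty$ and $|H_{ij}(u,v)-H_{ij}(u',v')|\le c_{N,\gamma}(|u-u'|^\gamma+|v-v'|^\gamma)$ whenever $|u|,|v|,|u'|,|v'|\le N$. Writing the increment of $s\mapsto g_{ij}(x_s)-g_{ij}(x'_s)$ over $[s,t]$ as $H_{ij}(x_t,x'_t)\cdot\big[(x_t-x'_t)-(x_s-x'_s)\big]+\big[H_{ij}(x_t,x'_t)-H_{ij}(x_s,x'_s)\big]\cdot(x_s-x'_s)$, raising to the power $q$ and summing over a partition — using $\gamma q=p$ in the Hölder term and $V_q\le V_p$ in the other — yields a bound of the form
\[
\bar V_q\big(g(x)-g(x')\big)_{[a,b)}\le\Lambda_N\big(1+V_p(x)_{[a,b]}+V_p(x')_{[a,b]}\big)\Big(\sup_{a\le s\le b}|x_s-x'_s|+V_p(x-x')_{[a,b]}\Big),
\]
with $\Lambda_N$ depending only on $d,p,\gamma,M_N,c_{N,\gamma}$. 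Proving this composition estimate under (H2)(b) is the one step where real work is needed; everything else is bookkeeping.

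Finally the patching. Since $x_0=x'_0$ and $l=l'$, Corollary \ref{cor1} gives $V_p(x-x')_t\le(d+1)V_p(y-y')_t$ for $t\le T$, where $y,y'$ are the integral parts of $x,x'$. Combining this with the three displayed estimates and the monotonicity $V_p(x)_{[a,b]}\le V_p(x)_T$, $V_p(x-x')_{[a,b]}\le V_p(x-x')_T<\infty$, one obtains a finite constant $\Lambda$ (depending on $N,T,V_p(x)_T,V_p(x')_T$ and the constants above) such that, for every $[a,b]\subseteq[0,T]$ with $x_a=x'_a$,
\[
V_p(x-x')_{[a,b]}\le(d+1)\Big(L_N\,V_1(a)_{[a,b]}+\Lambda\,V_p(z)_{[a,b]}\Big)\,V_p(x-x')_{[a,b]}.
\]
Now set $t_0=0$ and recursively $t_j=\inf\{t>t_{j-1}:L_N V_1(a)_{[t_{j-1},t]}>\tfrac1{4(d+1)}\ \text{or}\ \Lambda V_p(z)_{[t_{j-1},t]}>\tfrac1{4(d+1)}\}\wedge T$, exactly as in Lemma \ref{lem}; by superadditivity of $V_1(a)$ and $v_p(z)$ the same computation as there gives $m:=\inf\{j:t_j=T\}<\infty$. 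One checks by induction that $x=x'$ on $[0,t_j]$ for all $j\le m$: the case $j=0$ is $x_0=x'_0$, and if $x=x'$ on $[0,t_{j-1}]$ then $x-x'$ and $y-y'$ vanish there, so on $[t_{j-1},t_j)$ their $p$-variations over $[0,t]$ and over $[t_{j-1},t]$ coincide; moreover $x_{t_{j-1}}=x'_{t_{j-1}}$ gives $\sup_{t_{j-1}\le s\le t}|x_s-x'_s|\le V_p(x-x')_{[t_{j-1},t]}$, so the displayed inequality becomes $V_p(x-x')_{[t_{j-1},t]}\le\tfrac12 V_p(x-x')_{[t_{j-1},t]}$, forcing $V_p(x-x')_{[t_{j-1},t]}=0$ and hence $x=x'$ on $[t_{j-1},t_j)$; then $y=y'$ on $[0,t_j]$, and since $k_t=\sup_{s\le t}(y_s-l_s)^-$ componentwise, $x_{t_j}=x'_{t_j}$ too. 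Thus $x=x'$ on $[0,T]$ for every $T$, so $x\equiv x'$ and then $k=x-y=x'-y'=k'$. Apart from the composition estimate, the only delicate point is the finiteness of the number $m$ of patching steps, and that is handled word for word as in Lemma \ref{lem}.
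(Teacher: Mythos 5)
Your argument is correct and follows essentially the same route as the paper: Corollary \ref{cor1} plus the Young estimate (\ref{eq3.2}), a bound on $V_{p/\gamma}(g(x)-g(x'))$, and patching over finitely many small intervals exactly as in Lemma \ref{lem}. The only differences are presentational: the composition estimate you sketch via $H_{ij}(u,v)=\int_0^1\nabla_xg_{ij}(v+\theta(u-v))\,d\theta$ is precisely what the paper imports from \cite[Theorem 2]{du}, and your direct localization to the ball of radius $N\ge\sup_{t\le T}(|x_t|\vee|x'_t|)$ replaces the paper's two-step scheme of first assuming a global (H2$^*$) and then stopping at $s_k=\inf\{t:\max(|x^1_t|,|x^2_t|)>k\}$.
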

\begin{proof} Assume that there exist two solutions $(x^j,k^j)$,
$j=1,2$.\\
{\em Step 1.} We first replace (H2) by stronger condition
\begin{enumerate}
\item[(H2$^*$)] (a) $f:\Rd\rightarrow\Rd$  is Lipschitz continuous,
i.e. there is $L>0$ such that
\[
|f(x)-f(y)|\leq L|x-y|,\quad u\in\Rd.
\]
(b) $g:\Rd\rightarrow\Md$, its each component $g_{i,j}$ is
differentiable, \[\displaystyle{C=\max_{i,j}\sup_{x}|\nabla_xg_{i,j}(x)|}<\infty\] and there are
$\gamma\in(p-1,1]$ and $C_{\gamma}>0$ such that
\[
|\nabla_xg_{i,j}(x)-\nabla_xg_{i,j}(y)|\leq C_\gamma |x-y|^\gamma,\quad
x,y\in\Rd,\,\, i,j=1,\dots,d.
\]
\end{enumerate}
Fix $T\in\Rp$. By Corollary  \ref{cor1}, for any $t\leq T$ we have
\begin{align*}
\bar V_p(x^1-x^2)_t
&\leq (d+1)\bar V_p(\int_0^{\cdot}
f(x^1_{s-})-f(x^2_{s-})\,da_s)_t
\\
&\qquad+(d+1)\bar V_p(\int_0^{\cdot}g(x^1_{s-})-g(x^2_{s-})\,dz_s )_t.
\end{align*}
Moreover,
\[
\bar V_p(\int_0^{\cdot} f(x^1_{s-})-f(x^2_{s-})\,da_s)_t\leq
LV_1(a)_T\sup_{s\leq t}|x^1_s-x^2_s| \leq LV_1(a)_t\bar
V_p(x^1-x^2)_t
\]
and by (\ref{eq3.2}),
\[
\bar V_p(\int_0^{\cdot}g(x^1_{s-})-g(x^2_{s-})\,dy_s )_t
\leq C_{p,p/\gamma}\bar V_{p/\gamma}(g(x^1)-g(x^2))_tV_p(z)_t.
\]
By \cite[Theorem 2]{du}  for $i,j=1,\dots,d$ we have
\[V_{p/\gamma}(g_{i,j}(x^1)-g_{i,j}(x^2))_t
\leq C V_{p/\gamma}(x^1-x^2)_t
+C_\gamma\sup_{s\leq
t}|x^1_s-x^2_s| (V_p(x^1)_T)^\gamma.
\] Therefore
\begin{align*}
V_{p/\gamma}(g(x^1)-g(x^2))_t &\leq\s_{i,j=1}^{d}
V_{p/\gamma}(g_{i,j}(x^1)-g_{i,j}(x^2))_t\\
&\leq \tilde{C_1} V_{p/\gamma}(x^1-x^2)_t
+\tilde{C_2}\sup_{s\leq t}|x^1_s-x^2_s| (V_p(x^1)_T)^\gamma,
\end{align*}
where $\tilde{C_1}=C^{d^2}$, $\tilde C_2=(C_\gamma)^{d^2}$. Set
\[t_1=\inf\{t;\max[LV_1(a)_t,C_{p,p/\gamma}(\tilde{C_1}+\tilde{C_2}
V_p(x^1)_t^\gamma)V_p(z)_t]>\frac1{4(d+1)}\}\wedge T.\] Then $\bar
V_p(x^1-x^2)_{[0,t_1)}\leq\frac12\bar V_p(x^1-x^2)_{[0,t_1)}$,
thus $x^1=x^2$ on $[0,t_1)$. Since for $j=1,2$ we
have \[x^j_{t_1}=\max(x^j_{t_1-}
+f(x^j_{t_1-})\Delta a_{t_1}+g(x^j_{t_1-}) \Delta
y_{t_1},l_{t_1}),\] $x^1_{t_1}=x^2_{t_1}$, too. For $k\geq2$ set
\begin{align*}
t_k=\inf\{t>t_{k-1};&\max[LV_1(a)_{[t_{k-1},t)},\\
&C_{p,p/\gamma}(\tilde{C_1}+\tilde{C_2} V_p(x^1)_T^\gamma)V_p(z)_{[t_{k-1},t)}]
>\frac1{4(d+1)}\}\wedge T.
\end{align*}
Arguing as above we show recurrently that $x^1=x^2$ on each
interval $[t_{k-1},t_k]$. Since by the same arguments as in the
proof of Theorem \ref{thm3}, $m=\inf\{k;t_k=T\}$  is finite,
$x^1=x^2$ on the interval $[0,T]$,  which  completes the proof
under (H2$^*$). \\
{\em Step 2.} The general case. Set
$s_k=\inf\{t;\max(|x^1_t|,|x^2_t|\,)>k\}$, $k\in\N$. By  the first
part of the proof,
\[
x^1_t=x^2_t,\quad t<s_k,\,k\in\N.
\]
Since $s_k\to\infty$, this proves the corollary.
\end{proof}

\begin{corollary}\label{cor4}
Assume \mbox{\rm(H1)} and \mbox{\rm(H2)}. Let $a\in\D$ be such that $V_1(a)_T<\infty$,
$z$, $l\in\Dd$, with $V_p(z)_T<\infty$, $T\in\Rp$ and $x_0\geq l_0$. Let $\{(x^n,k^n)\}$ be a
sequence of approximations defined in Corollary \ref{cor3}.
Then
\begin{equation}
\label{eq3.16} (x^n,k^n)\lra (x,k)\quad\mbox{\rm
in}\,\,\Ddd\end{equation} and for any $T\in\Rp$,
\begin{equation}
\label{eq3.17}
\max_{k/n\leq T}|x^n_{k/n}-x_{k/n}|\lra0\quad
and\quad\max_{k/n\leq T}|k^n_{k/n}-k_{k/n}|\lra0,
\end{equation}
where $(x,k)$ is a unique solution of (\ref{eq3.1}).
\end{corollary}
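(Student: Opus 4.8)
The convergence (\ref{eq3.16}) is immediate: by Corollary \ref{cor3} the sequence $\{(x^n,k^n)\}$ is relatively compact in $\Ddd$ and each of its limit points solves (\ref{eq3.1}), while by Theorem \ref{thm4} that solution is unique and equals $(x,k)$; a relatively compact sequence whose only limit point is $(x,k)$ converges to $(x,k)$. For (\ref{eq3.17}) I would introduce the grid discretizations $\wh x^n_t=x_{k/n}$, $\wh k^n_t=k_{k/n}$ for $t\in[k/n,(k+1)/n)$, $k\in\No$, of the unique solution. Since $x^n,\wh x^n$ (resp.\ $k^n,\wh k^n$) are constant on each $[k/n,(k+1)/n)$,
\[
\max_{k/n\le T}|x^n_{k/n}-x_{k/n}|\le\sup_{t\le T}|x^n_t-\wh x^n_t|,\qquad \max_{k/n\le T}|k^n_{k/n}-k_{k/n}|\le\sup_{t\le T}|k^n_t-\wh k^n_t|
\]
for every $T\in\Rp$, so (\ref{eq3.17}) follows once $x^n-\wh x^n$ and $k^n-\wh k^n$ tend to $0$ uniformly on compacts. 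The plan is to establish the \emph{joint} convergence
\begin{equation}\label{eq3.18}
(x^n,k^n,\wh x^n,\wh k^n)\lra(x,k,x,k)\quad\mbox{\rm in}\,\,\dddd
\end{equation}
and then to compose it with the continuous (hence $J_1$--continuous, cf.\ \cite[Chapter VI]{js}) map $(u,v,u',v')\mapsto(u-u',v-v')$, which gives $(x^n-\wh x^n,k^n-\wh k^n)\lra(0,0)$ in $\Ddd$; since $J_1$--convergence to the continuous limit $0$ is convergence uniform on compacts, (\ref{eq3.17}) follows.

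It remains to prove (\ref{eq3.18}), and this is the main obstacle: the $J_1$--convergences $(x^n,k^n)\lra(x,k)$ (first paragraph) and $(\wh x^n,\wh k^n)\lra(x,k)$ (discretizations of c\`adl\`ag functions converge in $J_1$, as in Step 2 of the proof of Theorem \ref{thm1}) do not by themselves yield (\ref{eq3.18}) unless one checks that all four functions place their jumps at the same times with asymptotically equal sizes --- and this is precisely where the jumps of $(a,z,l)$ must be handled by hand. First one observes that $(x,k)$ jumps only at the jump times $\tau$ of $(a,z,l)$: with $y_t=x_0+\int_0^tf(x_{s-})\,da_s+\int_0^tg(x_{s-})\,dz_s$ one has $\Delta y_\tau=f(x_{\tau-})\Delta a_\tau+g(x_{\tau-})\Delta z_\tau$, while $k_t=\sup_{s\le t}(y_s-l_s)^-$ jumps componentwise only where $y$ or $l$ does, and the Skorokhod problem gives, componentwise, $x_\tau=(x_{\tau-}+\Delta y_\tau)\vee l_\tau$ and $\Delta k_\tau=\Delta x_\tau-\Delta y_\tau$.

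For a jump time $\tau$ of $(a,z,l)$ put $q_n(\tau)=\lceil n\tau\rceil/n\downarrow\tau$. Near $\tau$ the discretization $\wh x^n$ changes its value only at $q_n(\tau)$, passing from $x_{q_n(\tau)-1/n}\to x_{\tau-}$ to $x_{q_n(\tau)}\to x_\tau$ (right continuity of $x$); and the Euler scheme $x^n$ changes appreciably only at $q_n(\tau)$ as well, because $\Delta y^n_{q_n(\tau)}=f(x^n_{q_n(\tau)-1/n})(a_{q_n(\tau)}-a_{q_n(\tau)-1/n})+g(x^n_{q_n(\tau)-1/n})(z_{q_n(\tau)}-z_{q_n(\tau)-1/n})$ carries the jumps of $a,z$ at $\tau$ and $l_{q_n(\tau)}$ carries the jump of $l$. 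Using the continuity of $f$ and $g$ (condition (H1)), $x^n_{q_n(\tau)-1/n}\to x_{\tau-}$ (from the $J_1$--convergence $x^n\lra x$), the vanishing of the non-jump oscillation of $a$ and $z$ over $(q_n(\tau)-1/n,q_n(\tau)]$, and $l_{q_n(\tau)}\to l_\tau$, the recursion defining $(x^n,k^n)$ yields $\Delta x^n_{q_n(\tau)}\to\Delta x_\tau$ and $\Delta k^n_{q_n(\tau)}\to\Delta k_\tau$. Now fix $T\in\Rp$ and $\varepsilon>0$; there are only finitely many jump times $\tau_1,\dots,\tau_N$ of $(a,z,l)$ in $[0,T]$ with $|\Delta a_{\tau_j}|+|\Delta z_{\tau_j}|+|\Delta l_{\tau_j}|\ge\varepsilon$, for $n$ large all four functions $x^n,k^n,\wh x^n,\wh k^n$ have their large jumps exactly at the common grid points $q_n(\tau_1),\dots,q_n(\tau_N)$ with jump sizes converging to those of $(x,k)$ at $\tau_1,\dots,\tau_N$, the contribution of the remaining jumps being controlled via $\sum_\tau|\Delta a_\tau|\le V_1(a)_T$, $\sum_\tau|\Delta z_\tau|^p\le v_p(z)_T$, $\sum_\tau|\Delta l_\tau|^p\le v_p(l)_T$, and off these grid points the four functions agree with $(x,k)$ up to a small time shift and an error tending to $0$. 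A standard construction then produces increasing homeomorphisms $\rho_n$ of $\Rp$ with $\sup_{t\le T}|\rho_n(t)-t|\to0$ along which $x^n,k^n,\wh x^n,\wh k^n$ are, for $n$ large, within $C\varepsilon$ of $x,k,x,k$ in the sup norm on $[0,T]$; letting $\varepsilon\downarrow0$ gives (\ref{eq3.18}), and with it (\ref{eq3.17}). The jump bookkeeping in this last step is the only delicate point; everything else follows from the results already established and from $x$, $k$ being c\`adl\`ag.
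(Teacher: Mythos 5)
Your argument is correct and follows the paper's proof essentially verbatim: the paper likewise deduces (\ref{eq3.16}) from Corollary \ref{cor3} together with the uniqueness given by Theorem \ref{thm4}, introduces the grid discretizations $x^{(n)}_t=x_{k/n}$, $k^{(n)}_t=k_{k/n}$, obtains the joint convergence $(x^n,x^{(n)},k^n,k^{(n)})\to(x,x,k,k)$ in $\Ddddd$, and concludes that $x^n-x^{(n)}\to0$ and $k^n-k^{(n)}\to0$ in $\Dd$, which is equivalent to (\ref{eq3.17}). The only difference is that the paper obtains the joint convergence by citing \cite[Chapter VI, Proposition 2.2]{js} rather than by your hands-on jump bookkeeping, which is sound in outline but should not invoke $\sum_\tau|\Delta l_\tau|^p\le v_p(l)_T$ (here $l$ is only assumed c\`adl\`ag, so $v_p(l)_T$ may be infinite; the small jumps of $l$ are controlled simply by the fact that a c\`adl\`ag function has finitely many jumps of size $\ge\varepsilon$ on $[0,T]$).
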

\begin{proof} The convergence (\ref{eq3.16}) easily follows from
Corollary \ref{cor3}. If we set $x^{(n)}_t=x_{k/n}$,
$k^{(n)}_t=k_{k/n}$, $t\in[k/n,(k+1)/n)$, $k\in\N\cup\{0\}$, then
by \cite[Proposition 2.2]{js}
\[
(x^n,x^{(n)},k^n,k^{(n)})\lra (x,x,k,k)\quad\mbox{\rm in}\,\,\Ddddd
\]
Consequently,  $x^n-x^{(n)}\to0$   and $k^n-k^{(n)}\to0$  in
$\Dd$, which is  equivalent to (\ref{eq3.17}).
\end{proof}
\begin{corollary}
\label{cor5} Assume  \mbox{\rm(H1)}  and  \mbox{\rm(H2)}. Let
$a,z,l$ satisfy assumptions of Corollary \ref{cor4} and let
$\{a^n\}$ $\subset\D$,  $\{z^n\}$, $\{l^n\}$ $\subset\Dd$  be such
that for all $T\in\Rp$ $\sup_n V_1(a^n)_T<\infty$, $\sup_n V_p(z^n)_T<\infty$,
  and
\begin{equation}
\label{eq3.18} \sup_{t\leq T}(|a^n_t-a_t|+|z^n_t-z_t|+|l^n_t-l_t|)
\longrightarrow0,\quad T\in\Rp.
\end{equation}
If $\{(x^n,k^n)\}$ is a sequence of solutions of (\ref{eq3.11})  then
\[\sup_{t\leq T}(|x^n_t-x_t|+|k^n_t-k_t|)\longrightarrow0,\quad T\in\Rp,
\]
where $(x,k)$ is a unique  solution of (\ref{eq3.1}).
\end{corollary}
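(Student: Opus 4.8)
The plan is to combine the uniqueness result of Theorem \ref{thm4} with the convergence statement of Theorem \ref{thm3}, using a subsequence-extraction (Urysohn) argument to upgrade relative compactness into genuine convergence, and then to strengthen $J_1$-convergence to uniform convergence by exploiting continuity of the limit.

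First I would note that the hypotheses of Corollary \ref{cor5} contain those of Theorem \ref{thm3}: condition (H1) holds, and since (\ref{eq3.18}) is uniform convergence on compacts it implies in particular that $(x^n_0,a^n,z^n,l^n)\to(x_0,a,z,l)$ in $\Rd\times\Dddt$ (observe that $x^n_0=x_0$ by the definition of the scheme, and uniform convergence on compacts implies $J_1$-convergence). Hence by Theorem \ref{thm3} the sequence $\{(x^n,k^n)\}$ is relatively compact in $\Ddd$ and every limit point is a solution of (\ref{eq3.1}). But by Theorem \ref{thm4}, under (H1) and (H2) the solution of (\ref{eq3.1}) is unique, so there is only one possible limit point, namely $(x,k)$. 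A relatively compact sequence with a unique limit point converges: given any subsequence, extract a further convergent sub-subsequence; its limit must be $(x,k)$; since every subsequence has a sub-subsequence converging to $(x,k)$, the whole sequence converges to $(x,k)$ in $\Ddd$.

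It remains to upgrade $J_1$-convergence to uniform-on-compacts convergence. The point is that the limit $x$ is continuous: under the fractional-type assumption one would typically have $a$, $z$, $l$ continuous, but more to the point, if $a,z,l$ are continuous then $y=x_0+\int_0^\cdot f(x_{s-})\,da_s+\int_0^\cdot g(x_{s-})\,dz_s$ is continuous and hence, by the Skorokhod map preserving continuity, $x$ and $k$ are continuous as well. It is a standard fact that $J_1$-convergence to a continuous limit is equivalent to uniform convergence on compact sets. Applying this coordinatewise to $(x^n,k^n)\to(x,k)$ in $\Ddd$ gives $\sup_{t\leq T}(|x^n_t-x_t|+|k^n_t-k_t|)\to0$ for every $T\in\Rp$, which is the assertion. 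If one does not wish to assume $a,z,l$ continuous, one can instead argue that for every $T$ outside the (countable) set of discontinuity times of $a,z,l$ one has convergence of the evaluation maps, and fill in the jump points by right-continuity; but the cleanest route is via continuity of the limit.

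**The main obstacle** I anticipate is the passage from $J_1$-convergence in the path space to the uniform bound claimed in the statement. In general $J_1$-convergence does not imply uniform convergence, so one genuinely needs the continuity of the limiting path $(x,k)$ (equivalently, continuity of $a$, $z$, $l$, which is the relevant case in the applications of Section 4); with that in hand the remaining work is the routine Urysohn subsequence argument plus the well-known characterization of $J_1$-convergence to continuous limits. A minor point to check is that the scheme in Corollary \ref{cor3} genuinely produces solutions of (\ref{eq3.11}) with the stated uniform bounds $\sup_n V_1(a^n)_T<\infty$ and $\sup_n V_p(z^n)_T<\infty$ — but this was already verified in the proof of Corollary \ref{cor3}, so it may be invoked directly.
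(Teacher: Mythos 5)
Your first step --- invoking Theorem \ref{thm3} for relative compactness, Theorem \ref{thm4} for uniqueness of the limit point, and the standard subsequence argument to conclude $(x^n,k^n)\to(x,k)$ in $\Ddd$ --- is correct and is essentially what the paper does (the paper states the conclusion of Theorem \ref{thm3} directly as joint convergence $(x^n,k^n,a^n,z^n,l^n)\to(x,k,a,z,l)$, with the uniqueness/subsequence step left implicit).

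The gap is in the upgrade from $J_1$-convergence to uniform convergence. Corollary \ref{cor5} inherits the hypotheses of Corollary \ref{cor4}, which allow $a\in\D$, $z,l\in\Dd$ to be arbitrary c\`adl\`ag functions with the stated variation bounds; continuity is \emph{not} assumed, so the limit $x$ may genuinely jump and your main argument (``$J_1$-convergence to a continuous limit is uniform'') does not apply. Your fallback --- convergence at continuity points of $a,z,l$ plus right-continuity --- does not close the gap: $J_1$-convergence to a discontinuous limit need not be uniform (e.g.\ $\1_{[1+1/n,\infty)}\to\1_{[1,\infty)}$ in $J_1$ but not uniformly), and pointwise convergence off a countable set cannot control the sup norm near the jump times. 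The missing ingredient is the observation the paper uses: every jump of $x$ (and of $k$) occurs at a jump time of $a$, $z$ or $l$, i.e.\ $\Delta x_t\neq0$ implies $\Delta a_t\neq0$ or $\Delta z_t\neq0$ or $\Delta l_t\neq0$. Since by (\ref{eq3.18}) the drivers converge \emph{uniformly} and the convergence $(x^n,a^n,z^n,l^n)\to(x,a,z,l)$ is joint in $J_1$, the time-changes realizing the $J_1$-convergence are forced to be asymptotically the identity at the jump times of $x$, and \cite[Lemma C]{s1} then yields $\sup_{t\leq T}|x^n_t-x_t|\to0$ (and likewise for $k^n$). Without this alignment-of-jumps argument the statement in its stated generality is not proved.
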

\begin{proof}
By Theorem \ref{thm3},
\[ (x^n,k^n,a^n,z^n,l^n)\longrightarrow(x,k,a,z,l) \in\Dddddj.\]
Since for any  $t\in\Rp$,
\[\Delta x_t\neq0\Longrightarrow
\Delta a_t\neq0\,\,\mbox{\rm or}\,\,\Delta z_t\neq0 \,\, \mbox{\rm
or}\,\,\Delta l_t\neq0,\] \cite[Lemma C]{s1}  shows that
$\sup_{t\leq T}|x^n_t-x_t|\to0$, $T\in\Rp$. Similarly we show the
uniform convergence of $k^n$ to $k$.
\end{proof}

\begin{corollary} \label{cor6}
Assume \mbox{\rm(H1)} and \mbox{\rm(H2)}. Let
$a,z,l$ satisfy assumptions of Corollary \ref{cor4}. Set $x^n_0=x_0$, $k^n_0=0$,
$t^n_0=0$, \[t^n_k=\inf\{t>t^n_{k-1}; \max(|\Delta a_t|,|\Delta
z_t|,|\Delta l_t|)>\frac1{n}\}\wedge (t^n_{k-1}+\frac1{n}),\quad
k\in\N,\] and
\[
\left\{\begin{array}{ll}
\Delta y^n_{t_{k+1}^n}&=f(x^n_{t^n_k})(a_{t_{k+1}^n}-a_{t_k^n})
+g(x^n_{t^n_k})(z_{t_{k+1}^n}-z_{t^n_k}), \\
x^n_{t_{k+1}^n}&=\max\big(x^n_{t^n_k}
+\Delta y^n_{t_{k+1}^n},l_{t_{k+1}^n}\big),\\
k^n_{t_{k+1}^n}&=k^n_{t_k^n}+(x^n_{t_{k+1}^n}-x^n_{t^n_k}) -\Delta
y^n_{t_{k+1}^n}
\end{array}
\right.
\]
and $x^n_t=x^n_{t_k^n}$, $k^n_t=k^n_{t_k^n}$,
$t\in[t_k^n,t_{k+1}^n)$, $k\in\N\cup\{0\}$. If $\{(x^n,k^n)\}$ is
a sequence of solutions of (\ref{eq3.11})  then
\[
\sup_{t\leq T}(|x^n_t-x_t|+|k^n_t-k_t|)\longrightarrow0,\quad T\in\Rp,
\]
where $(x,k)$ is a unique  solution of (\ref{eq3.1}).
\end{corollary}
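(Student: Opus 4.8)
The plan is to identify the proposed scheme with the approximation~(\ref{eq3.11}) driven by step data subordinated to the adaptive grid $\{t^n_k\}$, and then to invoke Corollary~\ref{cor5}. Put $a^n_t=a_{t^n_k}$, $z^n_t=z_{t^n_k}$ and $l^n_t=l_{t^n_k}$ for $t\in[t^n_k,t^n_{k+1})$, $k\in\No$. Since $a,z,l$ are c\`adl\`ag, on each compact interval they have only finitely many jumps of absolute size larger than $1/n$, and because $t^n_{k+1}-t^n_k\le 1/n$ the grid $\{t^n_k\}$ is strictly increasing with $t^n_k\to\infty$; hence the scheme is well defined and $x^n,k^n,a^n,z^n,l^n$ are c\`adl\`ag step functions. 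Arguing exactly as in the proof of Corollary~\ref{cor3}, one checks that $(x^n,k^n)$ is a solution of (\ref{eq3.11}) for this choice of $a^n,z^n,l^n$, and clearly $x^n_0=x_0\ge l_0=l^n_0$. Moreover, passing to a step function indexed by a subset of times does not increase the $p$-variation, so $\sup_n V_1(a^n)_T\le V_1(a)_T<\infty$ and $\sup_n V_p(z^n)_T\le V_p(z)_T<\infty$ for every $T\in\Rp$.

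The core of the proof is to verify the uniform convergence hypothesis~(\ref{eq3.18}), namely $\sup_{t\le T}(|a^n_t-a_t|+|z^n_t-z_t|+|l^n_t-l_t|)\to0$; the three summands are treated in the same way, so I only discuss $a^n$. Fix $\varepsilon>0$ and, using the standard structure of c\`adl\`ag functions (see e.g. \cite{ek}), pick a partition $0=s_0<s_1<\dots<s_v=T$ with $\min_i(s_i-s_{i-1})>0$ such that the oscillation of $a$ on each $[s_{i-1},s_i)$ is less than $\varepsilon$; consequently every jump of $a$ on $(0,T]$ of size at least $\varepsilon$ sits at one of the points $s_1,\dots,s_{v-1}$. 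Choose $n$ so large that $1/n<\varepsilon$ and $1/n<\min_i(s_i-s_{i-1})$. By construction of $\{t^n_k\}$, every jump of $a$ of size exceeding $1/n$ — in particular every jump of size at least $\varepsilon$ — is a grid point; equivalently, if $s_i$ lies strictly between two consecutive grid points then $|\Delta a_{s_i}|\le 1/n$. Also each grid interval $[t^n_k,t^n_{k+1})$ has length less than $\min_i(s_i-s_{i-1})$, hence meets at most one of the $s_i$. For $t\in[t^n_k,t^n_{k+1})$ two cases arise: if $[t^n_k,t^n_{k+1})$ is contained in some $[s_{i-1},s_i)$ then $a^n_t=a_{t^n_k}$ and $a_t$ lie in the same oscillation block, so $|a^n_t-a_t|<\varepsilon$; otherwise there is a unique $s_i$ in the interior of $[t^n_k,t^n_{k+1})$, necessarily not a grid point, and writing $|a_{t^n_k}-a_t|\le|a_{t^n_k}-a_{s_i-}|+|\Delta a_{s_i}|+|a_{s_i}-a_t|$ the outer two terms are at most $\varepsilon$ each while the middle one is at most $1/n<\varepsilon$. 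In either case $|a^n_t-a_t|<3\varepsilon$ for all $t\le T$, and since $\varepsilon$ was arbitrary this gives $\sup_{t\le T}|a^n_t-a_t|\to0$; the same holds for $z^n$ and $l^n$.

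Having verified~(\ref{eq3.18}) together with $\sup_n V_1(a^n)_T<\infty$ and $\sup_n V_p(z^n)_T<\infty$, Corollary~\ref{cor5} applies directly and yields $\sup_{t\le T}(|x^n_t-x_t|+|k^n_t-k_t|)\to0$ for every $T\in\Rp$, which is the assertion. The only new ingredient relative to Corollaries~\ref{cor4} and~\ref{cor5} is the second paragraph: the adaptive grid $\{t^n_k\}$ is tailored so that the large jumps of $a,z,l$ become grid points while the mesh stays at most $1/n$, and this is exactly what upgrades the (merely $J_1$) convergence of the Euler data on a uniform grid to convergence in the supremum norm. I expect the bookkeeping in that step — in particular keeping track of which side of a partition point a given $t$ falls on, and the boundary case at $t=T$ — to be the main technical obstacle, the remainder being a routine reduction to the convergence result already proved.
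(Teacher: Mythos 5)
Your proposal is correct and follows essentially the same route as the paper: identify $(x^n,k^n)$ as a solution of (\ref{eq3.11}) with the step data $a^n,z^n,l^n$ subordinated to the adaptive grid, note the variation bounds, verify (\ref{eq3.18}) and invoke Corollary \ref{cor5}. The only difference is that you spell out the ``simple calculations'' behind (\ref{eq3.18}), which the paper leaves to the reader, and your oscillation-block argument for that step is sound.
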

\begin{proof}
Observe that $\{(x^n,k^n)\}$ is a sequence of solutions of
(\ref{eq3.11}) with  $a^n_t=a_{t_k^n}$, $z^n_t=z_{t_k^n}$,
$l^n_t=l_{t_k^n}$, $t\in[t_k^n,t_{k+1}^n)$, $k\in\N\cup\{0\}$, and
that
\[\sup_n V_1(a^n)_T\leq V_1(a)_T<\infty,\,\, \sup_n
V_p(z^n)_T\leq V_p(z)_T<\infty,\quad  T\in\Rp.\]
 Moreover, simple calculations show
that (\ref{eq3.18}) is satisfied. Therefore the desired result
follows from  Corollary \ref{cor5}.
\end{proof}

\section{SDEs with constraints}

Let $(\Omega, {\cal F}, ({\cal F}_t), P)$ be a filtered
probability space and let $A$ be an $({\cal F}_t)$ adapted process
with trajectories in $\D$, $Z,L$ be $({\cal F}_t)$ adapted
processes with trajectories in $\Dd$ such that  for any
$T\in\Rp$, $P(V_1(A)_T<\infty)=1$ and $P(V_p(Z)_T<\infty)=1$. Note that $Z$ need not to be a semimartingale.  However, it is a $p$-semimartigale and a Dirichlet process in the sense considered in \cite{k1,k2}  and \cite{cms}.

\begin{definition}\label{def1}
Let $X_0\geq L_0$. We  say that a
pair $(X,K)$  of  $({\cal F}_t)$ adapted processes with
trajectories in $\Dd$  such that $P(V_p(X)_T<\infty)=1$ for
$T\in\Rp$ is a strong solution of (\ref{eq1.1})  if
$(X,K)=SP_{L}(Y)$, where
\[
Y_t=X_0+\int_0^tf(X_{s-})\,dA_s+\int_0^tg(X_{s-})\,dZ_s,\quad t\in\Rp.
\]
\end{definition}

\begin{theorem} \label{thm5}
Assume \mbox{\rm(H1)}  and  \mbox{\rm(H2)}. If $X_0\geq L_0$ then
equation (\ref{eq1.1}) has a  unique strong solution $(X,K)$.
Moreover, if we define $(X^n,K^n)$ as
\[
X^n_t=X^n_{\tau_k^n},\quad K^n_t=K^n_{\tau_k^n},\quad
t\in[\tau_k^n,\tau_{k+1}^n),\quad k\in\N\cup\{0\},
\]
where $X^n_0=X_0$, $K^n_0=0$ and
\[
\left\{\begin{array}{ll}
\Delta Y^n_{\tau_{k+1}^n}&=f(X^n_{\tau^n_k})
(A_{\tau_{k+1}^n}-A_{\tau_k^n})+g(X^n_{\tau^n_k})
(Z_{\tau_{k+1}^n}-Z_{\tau^n_k}), \\
X^n_{\tau_{k+1}^n}&=\max\big(X^n_{\tau^n_k}
+\Delta Y^n_{\tau_{k+1}^n},L_{\tau_{k+1}^n}\big),\\
K^n_{\tau_{k+1}^n}&=K^n_{\tau_k^n}
+(X^n_{\tau_{k+1}^n}-X^n_{\tau^n_k})-\Delta Y^n_{\tau_{k+1}^n}
\end{array}
\right.
\]
with $\tau^n_0=0$, $\tau^n_k=\inf\{t>\tau^n_{k-1}; \max(|\Delta
A_t|,|\Delta Z_t|,|\Delta L_t|)>\frac1{n}\}\wedge
(\tau^n_{k-1}+\frac1{n})$, $n,k\in\N$, then for any $T\in\Rp$,
\[
\sup_{t\leq T}|X^n_t-X_t|\rightarrow0,\,\,\mbox{
P-a.s.},\quad\sup_{t\leq T}|K^n_t-K_t|\rightarrow0,\,\,
\mbox{P-a.s.},
\]
\end{theorem}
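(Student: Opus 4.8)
The plan is to obtain Theorem~\ref{thm5} by reading the deterministic results of Section~3 pathwise. Let $N\in{\cal F}$ be the $P$-null set off which $V_1(A(\omega))_T<\infty$ and $V_p(Z(\omega))_T<\infty$ for all $T\in\Rp$ (a countable union of null sets). For $\omega\notin N$ the data $a=A(\omega)$, $z=Z(\omega)$, $l=L(\omega)$, $x_0=X_0(\omega)$ satisfy the hypotheses of Theorem~\ref{thm4} (note $x_0\geq l_0$ because $X_0\geq L_0$), so the deterministic equation~(\ref{eq3.1}) has a unique solution, which we call $(X_\cdot(\omega),K_\cdot(\omega))$; set $(X,K)\equiv 0$ on $N$. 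By construction $(X(\omega),K(\omega))=SP_{L(\omega)}(Y(\omega))$ with $Y$ as in Definition~\ref{def1}, $V_p(X(\omega))_T<\infty$, and the defining properties of the Skorokhod problem hold for $K(\omega)$. Hence $(X,K)$ is a strong solution of~(\ref{eq1.1}) as soon as it is $({\cal F}_t)$-adapted, and it is the unique one: any strong solution $(X',K')$ has, for $P$-a.e.\ $\omega$, trajectories solving~(\ref{eq3.1}) with the same data, so $(X'(\omega),K'(\omega))=(X(\omega),K(\omega))$ by the uniqueness part of Theorem~\ref{thm4}, i.e.\ $(X',K')$ and $(X,K)$ are indistinguishable.

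Adaptedness of $(X,K)$ and convergence of the Euler scheme will both come out of Corollary~\ref{cor6}. First, the $\tau^n_k$ are $({\cal F}_t)$-stopping times: since $A,Z,L$ are c\`adl\`ag and adapted, the first time after a given stopping time at which $\max(|\Delta A_t|,|\Delta Z_t|,|\Delta L_t|)>1/n$ is again a stopping time (under the usual conditions on the filtration), and taking the minimum with $\tau^n_{k-1}+1/n$ preserves this; since $A,Z,L$ have only finitely many jumps of modulus exceeding $1/n$ on each bounded interval, between two such jumps $\tau^n_k$ increases by exactly $1/n$, whence $\tau^n_k\uparrow\infty$ and the scheme is well defined on all of $\Rp$. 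The recursion for $X^n,K^n$ uses only $A_{\tau^n_k},Z_{\tau^n_k},L_{\tau^n_k},X_0$, which are ${\cal F}_{\tau^n_k}$-measurable, so $X^n_{\tau^n_k},K^n_{\tau^n_k}\in{\cal F}_{\tau^n_k}$ and $(X^n,K^n)$ is adapted.

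Finally, fix $\omega\notin N$ and apply Corollary~\ref{cor6} with $a=A(\omega)$, $z=Z(\omega)$, $l=L(\omega)$, $x_0=X_0(\omega)$: the partition points $t^n_k$ of that corollary coincide with $\tau^n_k(\omega)$, the deterministic approximation built there is exactly $(X^n(\omega),K^n(\omega))$, and its uniform limit on compacts is the unique solution $(X(\omega),K(\omega))$ of~(\ref{eq3.1}). This gives $\sup_{t\leq T}(|X^n_t(\omega)-X_t(\omega)|+|K^n_t(\omega)-K_t(\omega)|)\to 0$ for every $T\in\Rp$, which is the asserted $P$-a.s.\ convergence; in particular $X,K$, being $P$-a.s.\ limits of the adapted processes $X^n,K^n$, are adapted, completing the verification. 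The only ingredient that is not a literal pathwise transcription of Section~3 is this last adaptedness claim (equivalently, the joint measurability of $\omega\mapsto(X_\cdot(\omega),K_\cdot(\omega))$), which I regard as the main --- though essentially routine --- obstacle, and it is where the standing assumption that $A,Z,L$ be adapted with c\`adl\`ag paths enters in an essential way.
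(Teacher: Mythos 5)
Your proposal is correct and follows essentially the same route as the paper: apply Theorem \ref{thm4} pathwise for existence and uniqueness, apply Corollary \ref{cor6} pathwise (with $t^n_k=\tau^n_k(\omega)$) for the a.s.\ uniform convergence of the scheme, and deduce adaptedness of $(X,K)$ from the adaptedness of the approximations $(X^n,K^n)$ together with the a.s.\ convergence. Your write-up merely makes explicit some details the paper leaves implicit (the exceptional null set, the stopping-time property of the $\tau^n_k$), so no further comparison is needed.
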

\begin{proof}
From Theorem \ref{thm4} we deduce that for every $\omega\in\Omega$
there exists a unique solution
$(X(\omega),K(\omega))=SP_{L(\omega)}(Y(\omega))$. Moreover, by
Corollary \ref{cor6}, for every $\omega\in\Omega$ and $T\in\Rp$,
\[
\sup_{t\leq T}|X^n_t(\omega)-X_t(\omega)|\rightarrow0, \quad
\sup_{t\leq T}|K^n_t(\omega)-K_t(\omega)|\rightarrow0.
\]
Since for any $n\in\N$ the pair $(X^n,K^n)$  is $({\cal F}_t)$
adapted, the pair of limit processes  $(X,K)$  is $({\cal F}_t)$
adapted as well, which completes the proof.
\end{proof}

\begin{corollary} \label{cor7}
Under the assumptions of Theorem \ref{thm5} with random sequences
of partitions $\{\tau^k_n\}$ replaced by  constant sequences
$\{\frac{k}{n}\}$, $k\in\N\cup\{0\}$, $n\in\N$, we have
\[
(X^n,K^n)\longrightarrow (X,K)\quad \mbox{P-a.s.}\,\,in\,\, \Ddd,
\]
where $(X,K)$ is a unique strong solution of (\ref{eq1.1}).
\end{corollary}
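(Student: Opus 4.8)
The plan is to reduce the statement to the pathwise deterministic convergence result of Corollary~\ref{cor4}. By Theorem~\ref{thm5} we already know that (\ref{eq1.1}) admits a unique strong solution $(X,K)$, and, by Definitions~\ref{def} and~\ref{def1}, for each $\omega$ this $(X,K)(\omega)$ is precisely the unique pair $SP_{L(\omega)}(Y(\omega))$ solving the deterministic equation (\ref{eq3.1}) with data $a=A(\omega)$, $z=Z(\omega)$, $l=L(\omega)$, $x_0=X_0(\omega)$; in particular $(X,K)$ is $({\cal F}_t)$ adapted, so only the asserted $J_1$-convergence remains to be proved.

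First I would fix a set $\Omega_0\in{\cal F}$ with $P(\Omega_0)=1$ on which $V_1(A)_T<\infty$ and $V_p(Z)_T<\infty$ for every $T\in\Rp$; such a set exists since these inequalities hold P-a.s.\ for each fixed $T$ and $\Rp$ is exhausted by countably many values of $T$. Since $X_0\geq L_0$ by hypothesis, for each $\omega\in\Omega_0$ the data $A(\omega),Z(\omega),L(\omega),X_0(\omega)$ satisfy all the assumptions of Corollary~\ref{cor4}. Now fix $\omega\in\Omega_0$: with the random partitions $\{\tau^n_k\}$ replaced by the deterministic grid $\{k/n\}$, the recursion defining $(X^n,K^n)$ is \emph{verbatim} the Euler-type scheme of Corollary~\ref{cor3} applied to this data. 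Since $f,g$ satisfy (H1) and (H2), Corollary~\ref{cor4} yields
\[
(X^n(\omega),K^n(\omega))\lra(X(\omega),K(\omega))\quad\mbox{\rm in}\,\,\Ddd,
\]
together with $\max_{k/n\leq T}|X^n_{k/n}(\omega)-X_{k/n}(\omega)|\to0$ and the analogous statement for $K$. As $\omega\in\Omega_0$ was arbitrary, $(X^n,K^n)\to(X,K)$ P-a.s.\ in $\Ddd$.

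There is essentially no obstacle here; the only point requiring a little care is the routine null-set bookkeeping, namely choosing $\Omega_0$ independently of $T$ and $n$. It is worth stressing what is \emph{not} available: unlike in Theorem~\ref{thm5}, with the deterministic grid $\{k/n\}$ one obtains $J_1$-convergence but not uniform (sup-norm) convergence, because a jump of $X$ at a non-grid time $t$ forces the corresponding jump of $X^n$ onto the nearest grid point. The stronger uniform convergence in Theorem~\ref{thm5} rests precisely on the partitions $\{\tau^n_k\}$ absorbing the large jumps of $A,Z,L$ (via Corollary~\ref{cor6}), and is therefore lost when those partitions are replaced by $\{k/n\}$.
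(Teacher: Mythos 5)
Your proposal is correct and follows exactly the paper's route: the paper's entire proof is ``It suffices to apply Corollary \ref{cor4},'' i.e.\ the same pathwise reduction to the deterministic Euler-scheme convergence that you spell out (with the null-set bookkeeping made explicit). Your closing remark on why only $J_1$-convergence, and not uniform convergence, survives with the grid $\{k/n\}$ is accurate and consistent with the paper's statement.
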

\begin{proof}
It suffices to apply Corollary \ref{cor4}.
\end{proof}

Let $\BH$ be a fractional Brownian  motion (fBm) with Hurst index
$H>1/2$, i.e. a continuous centered Gaussian process with
covariance
\[EB^H_{t_2}B^H_{t_1}=\frac12(t_2^{2H}+t_1^{2H}-|t_2-t_1|^{2H}),
\quad t_1,t_2\in\RRp.
\]

Let  $\BB=\int_0^\cdot\sigma_s\,d\BH_s$, where $\sigma:\RRp\to\RR$ is
 a measurable function  such that
$
\|\sigma\|_{\LH{[0,T]}}:=(\int_0^T|\sigma_s|^{1/H}ds)^H$
 $<\infty$, $T\in\Rp$. Then $\BB$ is
also  a continuous centered Gaussian process with continuous
trajectories. Moreover, if
$p>1/H$ then
\begin{equation}
\label{eq4.10} P(V_p(\BB)_T<\infty)=1,\quad T\in\Rp
\end{equation}
(see, e.g., \cite[Proposition 2.1]{fsz}).
Note also that $Z^H$  is a Dirichlet process from the class ${\cal
D}^{1/H}$ studied in \cite{cs}.

We now show how to apply our results to fractional SDEs with
constraints of the form (\ref{eq1.4}). Let
$B^H=(B^{H,1},\dots,B^{H,d})$, where  $B^{H,1},\dots,B^{H,d}$ are
independent  fractional Brownian motions, and let
$Z^H=(Z^{H,1},\dots,Z^{H,d})$, where
$Z^{H,i}=\int_0^\cdot\sigma^i_s\,d B^{H,i}_s$ with
$\sigma^i:\RRp\to\RR$ such that
$\|\sigma^i\|_{\LH{[0,T]}}<\infty$, $T>0$, $i=1,\dots,d$.

\begin{corollary} \label{cor10}
Assume \mbox{\rm(H1)} and  \mbox{\rm(H2)}. If $X_0\geq L_0$ then
equation (\ref{eq1.4}) has a    unique strong solution $(X,K)$.
Moreover, if
\[
X^n_t=X^n_{k/n},\quad K^n_t=K^n_{k/n},\quad
t\in[k/n,(k+1)/n),\quad k\in\N\cup\{0\},\,n\in\N,
\]
where $X^n_0=X_0$, $K^n_0=0$ and
\[\left\{\begin{array}{ll}
\Delta Y^n_{(k+1)/n}&=f(X^n_{k/n})(a_{(k+1)/n}-a_{k/n})
+g(X^n_{k/n})(\ZH_{(k+1)/n}-\ZH_{k/n}), \\
X^n_{(k+1)/n}&=\max\big(X^n_{k/n}+\Delta Y^n_{(k+1)/n},L_{(k+1)/n}\big),\\
K^n_{(k+1)/n}&=K^n_{k/n}+(X^n_{(k+1)/n}-X^n_{k/n})-\Delta
Y^n_{(k+1)/n},
\end{array}
\right.
\]
then for any $T\in\Rp$,
\[\sup_{t\leq T}|X^n_t-X_t|\rightarrow0,\,\,
P\mbox{-a.s.},\quad \sup_{t\leq T}|K^n_t-K_t|\rightarrow0,\,\,
P\mbox{-a.s.},
\]
\end{corollary}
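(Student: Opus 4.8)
The plan is to read (\ref{eq1.4}) as the special case of (\ref{eq1.1}) in which $A=a$ and $Z=\ZH$, and then to quote Theorem \ref{thm5} together with Corollary \ref{cor7}. Fix the exponent $p>1/H$ with respect to which \mbox{\rm(H1)}, \mbox{\rm(H2)} and (\ref{eq4.10}) are understood. One first checks that $a$ and $\ZH$ satisfy the structural requirements imposed on $A$ and $Z$ in Section 4. The function $a$ is continuous and of locally bounded variation, and it is deterministic, hence $({\cal F}_t)$-adapted with $P(V_1(a)_T<\infty)=1$ for every $T\in\Rp$. For $\ZH=(Z^{H,1},\dots,Z^{H,d})$, each coordinate $Z^{H,i}=\int_0^\cdot\sigma^i_s\,dB^{H,i}_s$ is, by the assumption $\|\sigma^i\|_{\LH{[0,T]}}<\infty$, a continuous centered Gaussian process, so by (\ref{eq4.10}) (with $\sigma^i$ and $B^{H,i}$ in the roles of $\sigma$ and $\BH$) we have $P(V_p(Z^{H,i})_T<\infty)=1$ for every $T\in\Rp$; since for $p\geq1$ the Euclidean $p$-variation of an $\Rd$-valued path is dominated by $d^{p-1}$ times the sum of the $p$-variations of its coordinates, it follows that $P(V_p(\ZH)_T<\infty)=1$ for every $T\in\Rp$. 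Finally $\ZH$ is $({\cal F}_t)$-adapted with continuous (in particular c\`adl\`ag) trajectories. Thus $a$ and $\ZH$ meet exactly the hypotheses on $A$ and $Z$ in Definition \ref{def1}, and since $X_0\geq L_0$ by assumption, Theorem \ref{thm5} yields the existence and uniqueness of the strong solution $(X,K)$ of (\ref{eq1.4}).

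For the approximation I would observe that the scheme displayed in the corollary is the Euler-type scheme of Theorem \ref{thm5}, but driven by the deterministic uniform grid $\{k/n:k\in\No\}$ instead of the random jump-adapted partitions $\{\tau^n_k\}$. Because $a$ and $\ZH$ have continuous trajectories, setting $a^n_t=a_{k/n}$, $z^n_t=\ZH_{k/n}$, $l^n_t=L_{k/n}$ for $t\in[k/n,(k+1)/n)$ one has $\sup_n V_1(a^n)_T\leq V_1(a)_T<\infty$, $\sup_n V_p(z^n)_T\leq V_p(\ZH)_T<\infty$ and, by uniform continuity of continuous paths on $[0,T]$, $\sup_{t\leq T}(|a^n_t-a_t|+|z^n_t-\ZH_t|)\to0$ $P$-a.s. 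Hence Corollary \ref{cor7} applies directly (alternatively one applies Corollary \ref{cor5} or Corollary \ref{cor6} pathwise), giving the $P$-a.s. convergence $\sup_{t\leq T}|X^n_t-X_t|\to0$ and $\sup_{t\leq T}|K^n_t-K_t|\to0$ for every $T\in\Rp$, the convergence being in $J_1$ and hence locally uniform because the limit $(X,K)$ is continuous whenever $L$ is (as in the non-negativity-constrained case $L\equiv0$ of \cite{br,fr1}).

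There is no genuine obstacle here: the corollary is a direct specialization of Theorems \ref{thm4}--\ref{thm5} and Corollaries \ref{cor5}--\ref{cor7} to the fractional driver. The only step that is not a pure verification is the passage from the scalar estimate (\ref{eq4.10}) to $P(V_p(\ZH)_T<\infty)=1$ for the $\Rd$-valued process $\ZH$, which relies on the elementary coordinatewise bound on $v_p$ recalled above together with (\ref{eq4.10}) applied to each component $B^{H,1},\dots,B^{H,d}$; everything else follows mechanically from the deterministic theory of Sections 2 and 3.
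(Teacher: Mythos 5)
Your proposal is correct and follows essentially the same route as the paper, whose proof of Corollary \ref{cor10} is simply to invoke Corollary \ref{cor7} (hence Theorem \ref{thm5} and the deterministic results of Section 3) together with the continuity of $a$ and $\ZH$; your additional verifications --- that (\ref{eq4.10}) applied componentwise gives $P(V_p(\ZH)_T<\infty)=1$ for the $\Rd$-valued process, and that the uniform grid discretizations converge uniformly on compacts --- are exactly the checks the paper leaves implicit. No further comment is needed.
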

\begin{proof}
It suffices to apply Corollary \ref{cor7}  and use the facts that
$a$  is a continuous function  and $\ZH$ has continuous
trajectories.
\end{proof}

\begin{remark}
To  approximate  $Z^H$ one can use the methods developed in
\cite{fsz,sz2}.
\end{remark}

\section{Appendix}

\begin{proposition}
\label{prop1}
Let  $\{x^n\}\subset\DMd$, $\{z^n\}\subset\Dd$  and
\[\sup_n \bar V_q(x^n)_T<\infty,\quad \sup_n V_p(z^n)_T<\infty,\quad T\in\Rp,\]
where $1/p+1/q>1$, $p,q\geq1$. If
$
\{z^n\}$ is relatively compact in $\Dd$
then
\[
\{(\,\int_0^{\cdot}x^n_{s-}\,dz_s^n,\,z^n\,)\}
\quad is \,\,relatively\,\, compact\,\, in\,\,\Ddd.
\]
\end{proposition}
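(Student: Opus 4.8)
The plan is to verify the two conditions characterising relative compactness in $\Ddd$, namely uniform boundedness of the trajectories on each compact time interval and the vanishing, uniformly in $n$, of the Skorokhod modulus $w'_T$ as the mesh tends to $0$ (see \cite[Chapter 3]{ek} or \cite[Chapter VI]{js}). Boundedness is immediate from (\ref{eq3.2}): as $\int_0^\cdot x^n_{s-}\,dz^n_s$ vanishes at $0$,
\[
\sup_{t\le T}\Big|\int_0^t x^n_{s-}\,dz^n_s\Big|\le V_p\Big(\int_0^\cdot x^n_{s-}\,dz^n_s\Big)_T\le C_{p,q}\,\bar V_q(x^n)_T\,V_p(z^n)_T,
\]
which is bounded in $n$ by hypothesis, while $\sup_{t\le T}|z^n_t|$ is bounded in $n$ because $\{z^n\}$ is relatively compact in $\Dd$. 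Thus only the modulus requires work, and the obstruction to a naive argument is that (\ref{eq3.2}) on a short block $[s,t]$ gives only $|\int_s^t x^n_{u-}\,dz^n_u|\le C_{p,q}\bar V_q(x^n)_{[s,t)}V_p(z^n)_{[s,t]}$, and $V_p(z^n)$ over a short block need not be small even when the oscillation of $z^n$ there is small (think of a sawtooth with many small teeth).

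To circumvent this I would fix $p'\in(p,\infty)$ with $1/p'+1/q>1$ --- possible because $1/p+1/q>1$ is an open condition. Applying (\ref{eq3.2}) with $p'$ in place of $p$ on the interval $[s,t]$, and using that the integral over $[s,s]$ is zero,
\[
\Big|\int_s^t x^n_{u-}\,dz^n_u\Big|\le C_{p',q}\,\bar V_q(x^n)_{[s,t)}\,V_{p'}(z^n)_{[s,t]},\qquad 0\le s\le t\le T.
\]
The elementary interpolation $v_{p'}(z)_{[s,t]}\le(\mathrm{osc}_{[s,t]}z)^{p'-p}v_p(z)_{[s,t]}$, valid since $p'>p$ and each increment of $z$ on $[s,t]$ is at most $\mathrm{osc}_{[s,t]}z:=\sup_{u,v\in[s,t]}|z_u-z_v|$, yields $V_{p'}(z^n)_{[s,t]}\le(\mathrm{osc}_{[s,t]}z^n)^{1-p/p'}(V_p(z^n)_T)^{p/p'}$; combining this with $\bar V_q(x^n)_{[s,t)}\le 2\bar V_q(x^n)_T$ and the hypotheses $\sup_n\bar V_q(x^n)_T<\infty$, $\sup_n V_p(z^n)_T<\infty$, one obtains a constant $C$ depending on neither $n$ nor $s,t\le T$, hence a function $\varphi(r)=C\,r^{1-p/p'}$ with $\varphi(r)\to0$ as $r\to0^+$, such that
\[
\Big|\int_s^t x^n_{u-}\,dz^n_u\Big|\le\varphi\big(\mathrm{osc}_{[s,t]}z^n\big),\qquad 0\le s\le t\le T.
\]

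Finally I would invoke the relative compactness of $\{z^n\}$ in $\Dd$: given $\eta>0$ and $T$, there are $\delta>0$ and $N$ such that for every $n\ge N$ there is a partition $0=s^n_0<\dots<s^n_{m_n}$ with $s^n_{m_n}\ge T$, $\min_j(s^n_j-s^n_{j-1})>\delta$ and $\max_j\,\mathrm{osc}_{[s^n_{j-1},s^n_j)}z^n<\eta$. If $s,t$ lie in a common block $[s^n_{j-1},s^n_j)$ then $[s,t]\subset[s^n_{j-1},s^n_j)$, so $\mathrm{osc}_{[s,t]}z^n<\eta$, whence $|\int_0^t x^n_{u-}\,dz^n_u-\int_0^s x^n_{u-}\,dz^n_u|\le\varphi(\eta)$ and $|z^n_t-z^n_s|<\eta$. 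Using this same partition for both coordinates gives $w'_T\big((\int_0^\cdot x^n_{s-}\,dz^n_s,\,z^n),\delta\big)\le\varphi(\eta)+\eta$ for $n\ge N$; the finitely many remaining indices $n$ are dealt with directly, each $z^n$ and each $\int_0^\cdot x^n_{s-}\,dz^n_s$ being a single c\`adl\`ag function. Letting $\eta\downarrow0$ (so $\delta\downarrow0$) gives $\lim_{\delta\to0}\sup_n w'_T\big((\int_0^\cdot x^n_{s-}\,dz^n_s,\,z^n),\delta\big)=0$ for every $T$, which with the boundedness above yields the relative compactness of $\{(\int_0^\cdot x^n_{s-}\,dz^n_s,\,z^n)\}$ in $\Ddd$.

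The step I expect to be decisive is the interpolation: a direct use of (\ref{eq3.2}) is too crude on short blocks, and the fix is to raise the integrability exponent of $z^n$ to $p'>p$ and then pay for the extra variation with a positive power of the (small) oscillation of $z^n$, via $v_{p'}\le(\mathrm{osc})^{p'-p}v_p$ --- this is exactly where the strict inequality $1/p+1/q>1$ enters. The synchronisation of the jumps of $\int_0^\cdot x^n_{s-}\,dz^n_s$ with those of $z^n$, which would otherwise obstruct the passage from the two coordinates to the pair in the Skorokhod space, is automatically respected here since the block estimate is applied on a partition adapted to $z^n$ alone.
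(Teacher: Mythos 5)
Your proof is correct, and it reaches the conclusion by a genuinely different route than the paper, even though both arguments turn on the very same decisive estimate. The paper first reduces to $z^n\to z$, replaces $z^n$ by adapted step functions $z^{n,(k)}$ built from stopping times that isolate the jumps of size $>\delta_k$, observes that the integrals against these step functions are finite sums (hence form a relatively compact family), and then shows that $\sup_{t\le T}|\int_0^t x^n_{s-}\,d(z^n-z^{n,(k)})_s|$ is small uniformly in $n$ for large $k$ --- precisely via the choice of $p'>p$ with $1/p'+1/q>1$ and the interpolation $V_{p'}(w)_T\le {\rm Osc}(w)_T^{1-p/p'}V_p(w)_T^{p/p'}$ applied to $w=z^n-z^{n,(k)}$. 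You instead verify the Skorokhod compactness criterion head-on, applying the same interpolation blockwise on partitions supplied by the relative compactness of $\{z^n\}$, so that the oscillation of the integral on each block is controlled by $C({\rm osc}\,z^n)^{1-p/p'}$; the synchronisation of jumps is then automatic, as you note. Your argument is more self-contained (no approximation of the integrator, no limit theorem for the discretized integrals), while the paper's construction has the advantage that the same step-function machinery is recycled in Proposition \ref{prop2} to identify the limit of the integrals, which your compactness-only argument does not yield. Both proofs locate the essential difficulty in the same place: $V_p(z^n)$ over a short block need not be small, and the cure is to trade variation exponent for oscillation using the strictness of $1/p+1/q>1$.
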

\begin{proof}
Without loss of generality we may and will assume that
\[
z^n\lra z\quad\mbox{\rm in}\quad\Dd.
\]
We follow arguments from the proof of 
\cite[Proposition 3.3]{ms} and \cite[Proposition 2]{s1}. Let $t^n_{k,0}= 0$, $ t^n_{k,i+1}=\min
( t^n_{k,i} + \delta_{k,i}, \inf \{ t> t^n_{k,i},|\Delta z^n_t|>
\delta_k \})$ and $t_{k,0}= 0$, $ t_{k,i+1}=\min ( t_{k,i} +
\delta_{k,i}, \inf \{t> t_{k,i},|\Delta z_t|> \delta_k \})$,
where $\{\delta_k\}$, $\{\{ \delta_{k,i} \}\}$ are families of
constants such that  $\delta _k \downarrow 0$, $|\Delta z_t| \neq
\delta_k , t \in \Rp $, $\delta_k /2 \leq \delta_{k,i} \leq
\delta_k$ and $|\Delta z_{t_{k,i} + \delta_{k,i}}|=0$,
$i\in\N\cup\{0\}$, $k,n\in\N$. Define
\[
z^{n,(k)}_t=z^n_{t^n_{k,i}},\,\, t\in [t^n_{k,i},t^n_{k,i+1})
\quad\mbox{\rm and}\quad z^{(k)}_t=z_{t_{k,i}}, \,\,t\in [t_{k,i},t_{k,i+1})
\]
for $i\in\N\cup\{0\}$, $n,k\in\N$.  Then $ V_p(z^{n,(k)})_T \leq
V_p(z^n)_T$, $n\in\N$, $ V_p(z^{(k)})_T \leq V_p(z)_T$ for
$T\in\Rp$ and
\begin{equation}\label{eq3.3}
t^n_{k,i}\to t_{k,i},\quad z^n_{t^n_{k,i}}\to z_{t_{k,i}}, \quad
i\in\N\cup\{0\}, k\in\N.
\end{equation} Consequently,
\begin{equation}
\label{eq3.4}\sup_nV_p(z^n-z^{n,(k)})_T<\infty,\quad k\in\N.
\end{equation}
and
\begin{equation}\label{eq3.5}
(\,z^{n,(k)},\,z^n\,)
\longrightarrow\,(\,z^{(k)},\,z\,)
\quad\mbox{\rm  in}\,\,\Ddd,\,\, k\in\N.
\end{equation}
Moreover,
\begin{equation}\label{eq3.6}\sup_{t\leq T}
|z^{(k)}_t-z_t|\lra0,\quad T\in\Rp,
\end{equation}
which together with (\ref{eq3.5}) implies that
\begin{equation}\label{eq3.7}
\lim_{k\to\infty}\limsup_{n\to\infty}\sup_{t\leq T}
|z^{n,(k)}_t-z^{n}_t|=0,\quad T\in\Rp.
\end{equation}
On the other hand, $\int_0^t x^{n}_{s-} dz^{n,(k)}_s=\sum _{j\leq
i}x^n_{t^n_{k,j}-}(z^n_{t^n_{k,j}}-z^n_{t^n_{k,j-1}})$,
$t\in[t^n_{k,i},t^n_{k,i+1})$.  Using  (\ref{eq3.3}),
(\ref{eq3.5}) and the fact that   $\sup_n\bar V_q(x^n)_T<\infty$
implies  that $\{\sup_{t\leq T}|x^n_t|\}$  is bounded,  we
conclude that for any $k\in\N$,
\begin{equation}\label{eq3.8}
\{(\int_0^{\cdot}x^n_{s-}dz^{n,(k)}_s,z^n)\}\quad\mbox{\rm is
relatively compact in}\,\, \Ddd.
\end{equation}
Let $p'>p$ be such that $1/p'+1/q>1$. By
(\ref{eq3.2})
\[
\sup_{t\leq T}|\int_0^tx^{n}_{s-}d(z^n-z^{n,(k)})_s| \leq
C_{p',q}\bar V_{q}(x^{n})_TV_{p'}(z^n-z^{n,(k)})_T.
\]
Moreover
\[
V_{p'}(z^{n}-z^{n,(k)})_T \leq \mbox{\rm
Osc}(z^n-z^{n,(k)})^{1-p/p'}_TV_p(z^n-z^{n,(k)})_T^{p/p'},
\]
where ${\rm Osc}(x)_T=\sup_{s,t\leq T}|x_t-x_s|$. Since \[\mbox{\rm
Osc}(z^n-z^{n,(k)})_T\leq 2\sup_{t\leq T}|z^n-z^{n,(k)}_t|\lra0,\]
we deduce from the above  that
\begin{equation}
\label{eq3.9} \lim_{k\to\infty}\limsup_{n\to\infty} \sup_{t\leq T}
|\int_0^tx^n_{s-}\,d(z^n-z^{n,(k)})_s|=0,\quad T\in\Rp.
\end{equation}
Combining (\ref{eq3.8})  with   (\ref{eq3.9})  we get the desired
result.
\end{proof}
\begin{corollary}\label{cora1} Let $\{a^n\}\subset \D$,
$\{z^n\}$,$\{y^{n}\}$   $\subset\Dd$ and
$\{x^{n}\}$ $\subset\DMd$ be sequences of functions such that
\[\sup_n\max( V_1(a^n)_T, V_p(z^n)_T, \bar V_q(y^{n})_T, \bar V_q(x^{n})_T)<\infty, \quad T\in\Rp,\] where
$1/p+1/q>1$, $p,q\geq1$. If
\[ \{(a^n,\,z^n)\}\quad is\,\, relatively\,\,
compact\,\, in\,\,\Dpd\] then
\[
\{(\,\int_0^{\cdot}y^{n}_{s-}\,da^n_s,\,a^n,\,\int_0^{\cdot}x^{n}_{s-}\,dz_s^n,\,z^n\,)\}
\quad is \,\,relatively\,\, compact\,\, in\,\,\Dpddd.
\]
\end{corollary}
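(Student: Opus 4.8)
The plan is to run the argument of Proposition \ref{prop1} \emph{jointly} for the two pairs (integral, integrator), using one common discretization grid built from the joint path $(a^n,z^n)$. Since relative compactness is stable under passing to subsequences and under continuous images, we may assume $(a^n,z^n)\lra(a,z)$ in $\Dpd$ and must then prove relative compactness of $\{(\int_0^\cdot y^n_{s-}\,da^n_s,a^n,\int_0^\cdot x^n_{s-}\,dz^n_s,z^n)\}$ in $\Dpddd$. Observe first that $\sup_n\bar V_q(x^n)_T<\infty$ and $\sup_n\bar V_q(y^n)_T<\infty$ force $\sup_n\sup_{t\leq T}(|x^n_t|+|y^n_t|)<\infty$, which is exactly the boundedness used for the finite-sum part of the proof of Proposition \ref{prop1}.

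First I would set $w^n=(a^n,z^n)$, $w=(a,z)$ and build, exactly as in the proof of Proposition \ref{prop1} (but using jumps of $w^n$ and $w$ in place of those of $z^n$ and $z$), grids $t^n_{k,0}=0$, $t^n_{k,i+1}=\min(t^n_{k,i}+\delta_{k,i},\inf\{t>t^n_{k,i}:|\Delta w^n_t|>\delta_k\})$ and the analogous $t_{k,i}$ from $w$, together with the frozen step functions $a^{n,(k)},z^{n,(k)},a^{(k)},z^{(k)}$. Because $(a^n,z^n)\lra(a,z)$ jointly, the joint analogues of (\ref{eq3.3})--(\ref{eq3.5}) hold: $t^n_{k,i}\to t_{k,i}$, $a^n_{t^n_{k,i}}\to a_{t_{k,i}}$, $z^n_{t^n_{k,i}}\to z_{t_{k,i}}$ and $(a^{n,(k)},z^{n,(k)},a^n,z^n)\lra(a^{(k)},z^{(k)},a,z)$ in ${\mathbb D}(\Rp,{\mathbb R}^{2d+2})$. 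Moreover, since the grid has mesh $\leq\delta_k$ and catches every jump of $w$ of size $>\delta_k$, it produces a uniform step-function approximation of the fixed paths $a$ and $z$, i.e. $\sup_{t\leq T}(|a^{(k)}_t-a_t|+|z^{(k)}_t-z_t|)\to0$ as $k\to\infty$ (the analogue of (\ref{eq3.6})); combining this with the joint convergence just stated yields the analogue of (\ref{eq3.7}): $\lim_k\limsup_n\sup_{t\leq T}(|a^{n,(k)}_t-a^n_t|+|z^{n,(k)}_t-z^n_t|)=0$.

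Next, for each fixed $k$ the two approximate integrals are finite sums, $\int_0^t y^n_{s-}\,da^{n,(k)}_s=\sum_{j\leq i}y^n_{t^n_{k,j}-}(a^n_{t^n_{k,j}}-a^n_{t^n_{k,j-1}})$ for $t\in[t^n_{k,i},t^n_{k,i+1})$ and similarly for $\int_0^\cdot x^n_{s-}\,dz^{n,(k)}_s$; on $[0,T]$ there are at most $2T/\delta_k$ grid points, the jump times converge and the jump heights are bounded (by $\sup_n\sup_{t\le T}(|x^n_t|+|y^n_t|)<\infty$). Hence, given any subsequence, a diagonal extraction making $y^n_{t^n_{k,j}-}$ and $x^n_{t^n_{k,j}-}$ converge for every $j$ (finitely many up to each $T$) makes both finite sums converge in $\Dd$, and, together with $(a^n,z^n)\lra(a,z)$, this gives --- just as in the passage to (\ref{eq3.8}) --- that for every $k$, $\{(\int_0^\cdot y^n_{s-}\,da^{n,(k)}_s,a^n,\int_0^\cdot x^n_{s-}\,dz^{n,(k)}_s,z^n)\}$ is relatively compact in $\Dpddd$.

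Finally I would estimate the discretization errors. Choosing $p'>p$ with $1/p'+1/q>1$ (so also $p'>1$), (\ref{eq3.2}) together with the interpolation bound $V_{p'}(u)_T\leq\mathrm{Osc}(u)_T^{1-p/p'}V_p(u)_T^{p/p'}$ gives
\[
\sup_{t\leq T}\Big|\int_0^t y^n_{s-}\,d(a^n-a^{n,(k)})_s\Big|\leq C_{p',q}\,\bar V_q(y^n)_T\,\big(2\sup_{t\leq T}|a^n_t-a^{n,(k)}_t|\big)^{1-1/p'}\big(2V_1(a^n)_T\big)^{1/p'},
\]
and an analogous bound for $\sup_{t\leq T}|\int_0^t x^n_{s-}\,d(z^n-z^{n,(k)})_s|$ in terms of $\bar V_q(x^n)_T$, $\sup_{t\le T}|z^n_t-z^{n,(k)}_t|$ and $V_p(z^n)_T$; by the analogue of (\ref{eq3.7}) and the standing uniform bounds, the right-hand sides vanish in the sense $\lim_k\limsup_n$. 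Thus the target $4$-tuple is, uniformly in $n$ as $k\to\infty$, a uniform-on-compacts limit of the relatively compact sequences of the previous step, hence itself relatively compact in $\Dpddd$, which is the conclusion. The main obstacle --- and the reason one cannot simply invoke Proposition \ref{prop1} twice --- is the jointness: since relative compactness is not stable under products in the $J_1$ topology, one must carry the single grid built from $(a^n,z^n)$ through the whole argument and verify the \emph{joint} analogues of (\ref{eq3.5})--(\ref{eq3.7}).
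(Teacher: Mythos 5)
Your proof is correct, but it takes a genuinely different route from the paper. The paper's own proof is a short reduction: it sets $\bar d=2d$, stacks the two integrators into a single $2d$-dimensional path $\bar z^n$ (with $a^n$ occupying the first $d$ coordinates and $z^n$ the last $d$) and the two integrands into a single block-diagonal matrix $\bar x^n\in\Dbdwad$, notes that $\bar z^n$ inherits bounded $p$-variation (as $V_p(a^n)_T\le V_1(a^n)_T$) and relative compactness from $\{(a^n,z^n)\}$, and then invokes Proposition \ref{prop1} \emph{once} for the combined system; the four-tuple is recovered by reading off the blocks of $(\int_0^\cdot\bar x^n_{s-}\,d\bar z^n_s,\bar z^n)$. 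You instead re-run the discretization argument of Proposition \ref{prop1} from scratch for the joint path $w^n=(a^n,z^n)$, verifying the joint analogues of (\ref{eq3.3})--(\ref{eq3.9}) with a single common grid. Both handle the real issue in the same way at bottom --- relative compactness in $J_1$ is not preserved under taking products, so one needs a single time change controlling all four components simultaneously, which the paper achieves by fusing the integrators into one vector and you achieve by fusing the grids --- and your diagnosis that one cannot simply apply Proposition \ref{prop1} twice is exactly right. What the paper's reduction buys is brevity and reuse of an already-proved statement; what your direct argument buys is transparency (the common grid and the $\lim_k\limsup_n$ error estimates are explicit, including the correct interpolation exponent $1-1/p'$ for the bounded-variation component $a^n$) at the cost of repeating essentially the whole proof of Proposition \ref{prop1}. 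All the individual steps you carry out (boundedness of $\sup_{t\le T}(|x^n_t|+|y^n_t|)$ from the $\bar V_q$ bounds, the diagonal extraction for the finite sums, the choice of $p'>p$ with $1/p'+1/q>1$) are sound.
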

\begin{proof} Set $\bar d=2d$  and for every $n\in\N$ define  $\bar  z^n\in\Dbd$ and
 $\bar x^n\in\Dbdwad$
 by the formulas
  \[\bar z^{n,i}=\left\{\begin{array}{ll}
a^n,&i=1,...,d, \\
z^{n,i-d},&i=d+1,...,2d
\end{array}
\right.
\]
and
 \[(\bar x^{n})_{i,j}=\left\{\begin{array}{ll}
y^{n,i},&i=j=1,...,d, \\
(x^{n})_{i-d,j-d},&i=d+1,...,2d,\,j=d+1,...,2d,\\
0,& \mbox{\rm otherwise.}
\end{array}
\right.
\]
By Proposition \ref{prop1}, 
\[ \{(\,\int_0^{\cdot}\bar
x^n_{s-}\,d\bar z_s^n,\,\bar z^n\,)\} \quad is \,\,relatively\,\,
compact\,\, in\,\,\Dbdd,
\]
from which one can deduce the corollary.
\end{proof}
\begin{proposition}
\label{prop2} Let  $\{x^n\}\subset\DMd$, $\{z^n\}\subset\Dd$
 and
 \[\sup_n \bar V_q(x^n)_T<\infty,\quad\sup_n
V_p(z^n)_T<\infty, \quad T\in\Rp,\] where $1/p+1/q>1$, $p,q\geq1$.  If
$
(\,x^n,z^n\,)\longrightarrow\,(\,x,z\,)$ in  $\Ddwad$
then
\[
(\,x^n,z^n,\,\int_0^{\cdot}x_{s-}^n\,dz_s^n\,)
\longrightarrow\,(\,x,z,\,\int_0^{\cdot}x_{s-}\,dz_s\,) \quad  in
\,\,\Ddwadd.
\]
\end{proposition}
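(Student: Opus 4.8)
The plan is to upgrade the relative-compactness statement of Proposition \ref{prop1} to a genuine convergence statement by identifying the limit of the integral term. The starting observation is that by Proposition \ref{prop1} the sequence $\{(\int_0^\cdot x^n_{s-}\,dz^n_s, x^n, z^n)\}$ is relatively compact in $\Ddwadd$ (one applies Proposition \ref{prop1} to the pair $(x^n, z^n)$, which is convergent and hence relatively compact, and then adjoins the already-convergent coordinates $x^n, z^n$ using \cite[Chapter VI, Proposition 2.2]{js}). Hence from every subsequence one can extract a further subsequence along which $(\int_0^\cdot x^n_{s-}\,dz^n_s, x^n, z^n) \lra (U, x, z)$ in $\Ddwadd$ for some limiting process $U$. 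The whole burden of the proof is then to show $U_t = \int_0^t x_{s-}\,dz_s$; once that is done, the limit is independent of the subsequence and the full convergence follows by the usual subsequence argument.

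To identify $U$, I would first reduce to the case of step integrands and step integrators by a double approximation exactly as in the proof of Proposition \ref{prop1}. That is, introduce the discretizations $z^{n,(k)}$ and $z^{(k)}$ (and analogous discretizations $x^{n,(k)}$, $x^{(k)}$ of the integrand along the same partition points $t^n_{k,i} \to t_{k,i}$ chosen so that the discontinuities of $z$ are respected). For fixed $k$ the discretized integral $\int_0^t x^{n,(k)}_{s-}\,dz^{n,(k)}_s$ is a finite sum $\sum_{j\le i} x^n_{t^n_{k,j}-}(z^n_{t^n_{k,j}}-z^n_{t^n_{k,j-1}})$ over $t\in[t^n_{k,i},t^n_{k,i+1})$, and since $t^n_{k,i}\to t_{k,i}$, $x^n_{t^n_{k,i}}\to x_{t_{k,i}}$, $z^n_{t^n_{k,i}}\to z_{t_{k,i}}$ (this last convergence uses that $(x^n,z^n)\to(x,z)$ in the $J_1$ topology together with the careful choice of the partition points, as in (\ref{eq3.3})), one gets the pointwise convergence of these finite sums and hence $\int_0^\cdot x^{n,(k)}_{s-}\,dz^{n,(k)}_s \lra \int_0^\cdot x^{(k)}_{s-}\,dz^{(k)}_s$ in $\Dd$ for each fixed $k$. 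On the other hand, the limit-in-$k$ statement $\int_0^\cdot x^{(k)}_{s-}\,dz^{(k)}_s \to \int_0^\cdot x_{s-}\,dz_s$ (uniformly on compacts) is a deterministic fact about Riemann--Stieltjes integrals of $p$- and $q$-variation functions: it follows from the estimate (\ref{eq3.2}) applied to the differences $x-x^{(k)}$ and $z-z^{(k)}$, together with the fact that the $J_1$-consistency of the partitions forces $\mathrm{Osc}(x-x^{(k)})_T\to0$ and $\mathrm{Osc}(z-z^{(k)})_T\to0$, so that by interpolation between $p$ (resp. $q$) and a slightly larger exponent $p'$ (resp. $q'$) the relevant variation seminorms tend to zero.

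The remaining point is to control the two error terms uniformly, i.e. to show
\[
\lim_{k\to\infty}\limsup_{n\to\infty}\sup_{t\le T}\Big|\int_0^t x^n_{s-}\,dz^n_s - \int_0^t x^{n,(k)}_{s-}\,dz^{n,(k)}_s\Big| = 0,\quad T\in\Rp.
\]
This is handled by splitting the difference into $\int_0^\cdot x^n_{s-}\,d(z^n-z^{n,(k)})_s$ and $\int_0^\cdot (x^n_{s-}-x^{n,(k)}_{s-})\,dz^{n,(k)}_s$, applying (\ref{eq3.2}) with an exponent $p'>p$ (resp. $q'>q$) satisfying $1/p'+1/q>1$ (resp. $1/p+1/q'>1$), and bounding the $V_{p'}$-seminorm of $z^n-z^{n,(k)}$ by $\mathrm{Osc}(z^n-z^{n,(k)})_T^{1-p/p'}V_p(z^n-z^{n,(k)})_T^{p/p'}$ just as in (\ref{eq3.9}), with the oscillation going to zero by (\ref{eq3.6})--(\ref{eq3.7}); symmetrically for the integrand term one uses that $x^n\to x$ in $J_1$ and $x^{n,(k)}\to x^{(k)}$ forces the oscillation of $x^n - x^{n,(k)}$ to be small for $n$ large and $k$ large. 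Passing to the limit first in $n$ (for fixed $k$) and then in $k$, and combining with the convergence of the discretized integrals, yields $U = \int_0^\cdot x_{s-}\,dz_s$. The main obstacle is precisely this last uniform double limit: one must be careful that the $J_1$ (rather than uniform) convergence of $x^n$ and $z^n$ still delivers the needed smallness of oscillations along the adapted partitions, which is why the partition points must be chosen to match the jump locations of the limit, exactly as in the proof of Proposition \ref{prop1}.
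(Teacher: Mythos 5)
Your overall strategy is the paper's: discretize the integrator along partitions $\{t^n_{k,i}\}$ adapted to the jumps of $z^n$, pass to the limit in the resulting finite sums using $t^n_{k,i}\to t_{k,i}$, $z^n_{t^n_{k,i}}\to z_{t_{k,i}}$, and control the error $\int_0^\cdot x^n_{s-}\,d(z^n-z^{n,(k)})_s$ by the double limit (\ref{eq3.9}), i.e.\ by the Young-type estimate (\ref{eq3.2}) with an exponent $p'>p$ together with the interpolation bound $V_{p'}\leq \mathrm{Osc}^{1-p/p'}\,V_p^{p/p'}$. The subsequence/limit-identification framing at the start is harmless but not needed; the paper concludes directly from the $k$-indexed approximations.

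The one genuine problem is your additional discretization of the integrand. The paper does not discretize $x^n$ at all: since $z^{n,(k)}$ is a step function, $\int_0^t x^n_{s-}\,dz^{n,(k)}_s$ is already the finite sum $\sum_{j\leq i}x^n_{t^n_{k,j}-}(z^n_{t^n_{k,j}}-z^n_{t^n_{k,j-1}})$, so only the single error term $\int_0^\cdot x^n_{s-}\,d(z^n-z^{n,(k)})_s$ arises, and it is exactly the one treated in (\ref{eq3.9}). By introducing $x^{n,(k)}$ you create the extra term $\int_0^\cdot (x^n_{s-}-x^{n,(k)}_{s-})\,dz^{n,(k)}_s$, and your control of it rests on the claim that $\mathrm{Osc}(x^n-x^{n,(k)})_T$ becomes small for large $k$ and $n$. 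With the partitions of Proposition \ref{prop1} this is false in general: the points $t^n_{k,i}$ are chosen to exhaust the jumps of $z^n$ exceeding $\delta_k$, with no reference to $x^n$, so if $x$ has a jump of size $c>0$ at a time interior to the partition intervals for all $k$, then $\sup_{t\leq T}|x_t-x^{(k)}_t|\geq c$ for every $k$ and the double limit fails. (Note also the internal inconsistency that $x^{n,(k)}_{t^n_{k,j}-}=x^n_{t^n_{k,j-1}}$, which differs from $x^n_{t^n_{k,j}-}$, so the finite sum you display is in fact $\int_0^t x^n_{s-}\,dz^{n,(k)}_s$ and not $\int_0^t x^{n,(k)}_{s-}\,dz^{n,(k)}_s$.) The gap is repairable: either redefine the partitions so that they also absorb the jumps of $x^n$ and $x$ exceeding $\delta_k$ (joint $J_1$ convergence of $(x^n,z^n)$ makes this possible and leaves (\ref{eq3.9}) intact), or, more simply, drop the discretization of the integrand altogether, as the paper does.
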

\begin{proof}
Since $V_q(x)_T\leq\liminf_{n\to\infty} V_q(x^n)_T<\infty$ and
 $V_p(z)_T\leq\liminf_{n\to\infty} V_p(z^n)_T <\infty$
for $T\in\Rp$, the integral $\int_0^\cdot x_{s-} dz_s$ is well
defined. Let $\{\{z^{n,(k)}\}\}$, $\{z^{(k)}\} $ be  families of
functions defined in the proof of Proposition \ref{prop1}. It
follows from the equality $\int_0^t x_{s-} dz^{(k)}_s=\sum_{j\leq
i}x_{t_{k,j}-}(z_{t_{k,j}}-z_{t_{k,j-1}})$,
$t\in[t_{k,i},t_{k,i+1})$, and (\ref{eq3.3}), (\ref{eq3.5}) that
for any $k\in\N$,
\begin{equation}\label{eq3.10}
(x^n,z^n,\int_0^{\cdot}x_{s-}^{n}\,dz_s^{n,(k)})
\longrightarrow\,(x,z,\int_0^{\cdot}x_{s-}\,dz^{(k)}_s)\quad\mbox{\rm
in}\,\,\Ddwadd.
 \end{equation}
As in the proof of (\ref{eq3.9}) we check that
$\lim_{k\to\infty}\sup_{t\leq T}|\int_0^tx_{s-}d(z-z^{(k)})_s|=0$,
$T\in\Rp$. From this and  (\ref{eq3.9}), (\ref{eq3.10}) the result
follows.
\end{proof}
Using arguments from the proof of Corollary \ref{cora1} it is easy
to check that Proposition \ref{prop2} implies the following
corollary.
\begin{corollary}\label{cora2} Let $\{a^n\}\subset \D$,
$\{z^n\}$,$\{y^{n}\}$   $\subset\Dd$ and
$\{x^{n}\}$  $\subset\DMd$ be sequences of functions such that
\[\sup_n \max(V_1(a^n)_T, V_p(z^n)_T, \bar V_q(y^{n})_T, \bar V_q(x^{n})_T)<\infty,\quad  T\in\Rp,\]
 where
$1/p+1/q>1$, $p,q\geq1$. If \[
(y^{n},\,a^n,\,x^{n},\,z^n)\longrightarrow(y,\,a,\,x,\,z)\quad
in\,\,\Dppd\]
then
\[
(\int_0^{\cdot}y^{n}_{s-}\,da^n_s,\, a^n,\, \int_0^{\cdot}x^{n}_{s-}\,dz_s^n,\, z^n)\longrightarrow
(\int_0^{\cdot}y_{s-}\,da_s,\, a, \,\int_0^{\cdot}x_{s-}\,dz_s, \,z) \]
in $\Dpppd$.
\end{corollary}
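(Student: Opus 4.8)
The plan is to reduce the statement to Proposition \ref{prop2} by the block-diagonal stacking device already used in the proof of Corollary \ref{cora1}. First I would set $\bar d=2d$ and, for each $n$, merge the two integrators into a single $\mathbb{R}^{\bar d}$-valued path $\bar z^n$ and the two integrands into a single $\bar d\times\bar d$ matrix-valued path $\bar x^n$ by the very formulas of Corollary \ref{cora1}: put $\bar z^{n,i}=a^n$ for $i\le d$ and $\bar z^{n,i}=z^{n,i-d}$ for $d<i\le2d$, and let $\bar x^n$ be block diagonal with $(\bar x^n)_{i,i}=y^{n,i}$ on the upper-left $d\times d$ block, $(\bar x^n)_{i,j}=(x^n)_{i-d,j-d}$ on the lower-right $d\times d$ block, and zero elsewhere. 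Writing out the matrix--vector product shows that the first $d$ coordinates of $\int_0^\cdot\bar x^n_{s-}\,d\bar z^n_s$ are exactly $\int_0^\cdot y^n_{s-}\,da^n_s$ and the last $d$ coordinates are $\int_0^\cdot x^n_{s-}\,dz^n_s$, while $\bar z^n$ carries $a^n$ and $z^n$ in its first and last $d$ coordinates.

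Next I would check the hypotheses of Proposition \ref{prop2}, applied with $\bar d$ in place of $d$. The bound $\sup_n\bar V_q(\bar x^n)_T<\infty$ is immediate from the block structure and the assumed bounds on $\bar V_q(y^n)_T$ and $\bar V_q(x^n)_T$. For the integrator I would invoke the elementary inequality $V_p\le V_1$, valid for $p\ge1$, whence $V_p(\bar z^n)_T$ is dominated by a constant multiple of $V_p(a^n)_T+V_p(z^n)_T\le V_1(a^n)_T+V_p(z^n)_T$, which is uniformly bounded by hypothesis. For the required convergence $(\bar x^n,\bar z^n)\to(\bar x,\bar z)$ in ${\mathbb D}(\Rp,{\mathbb R}^{{\bar d}^2+{\bar d}})$, I would note that $(\bar x^n,\bar z^n)$ arises from $(y^n,a^n,x^n,z^n)$ by a fixed rearrangement of coordinates together with the insertion of identically-zero coordinates; this is a continuous operation in the $J_1$ topology, since one common time change serves all coordinates, so the assumed joint convergence in $\Dppd$ transfers to the stacked pair.

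Proposition \ref{prop2} then gives
\[(\bar x^n,\bar z^n,\int_0^\cdot\bar x^n_{s-}\,d\bar z^n_s)\longrightarrow(\bar x,\bar z,\int_0^\cdot\bar x_{s-}\,d\bar z_s)\quad\mbox{\rm in}\,\,{\mathbb D}(\Rp,{\mathbb R}^{{\bar d}^2+2{\bar d}}).\]
To finish I would project this joint convergence onto the coordinates reproducing $(\int_0^\cdot y^n_{s-}\,da^n_s,a^n,\int_0^\cdot x^n_{s-}\,dz^n_s,z^n)$; selecting a subvector of a jointly convergent family is continuous in $J_1$ (cf. \cite[Chapter VI]{js}), which delivers convergence in $\Dpppd$, as claimed.

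The only delicate point is the $J_1$ bookkeeping: marginal $J_1$ convergence of the separate pieces would not suffice, so it is essential that the hypothesis already supplies \emph{joint} convergence of $(y^n,a^n,x^n,z^n)$ in $\Dppd$, and that both the stacking and the final coordinate extraction act on an already-joint limit, where they are continuous. Everything else is the same routine matrix-product computation carried out in Corollary \ref{cora1}.
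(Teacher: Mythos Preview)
Your proposal is correct and follows exactly the route the paper intends: the paper's own proof of Corollary~\ref{cora2} consists of a single sentence referring back to the block-diagonal stacking of Corollary~\ref{cora1} and an application of Proposition~\ref{prop2}, which is precisely what you carry out in detail. Your additional remarks on why the $J_1$ bookkeeping goes through (joint convergence in the hypothesis, continuity of coordinate rearrangement and projection) are accurate and simply make explicit what the paper leaves implicit.
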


\end{document}